\documentclass[12pt]{article}


\usepackage{amsfonts,amssymb,amsmath,amsthm,tikz,stmaryrd,hyperref}
\usepackage{mathrsfs}
\usepackage{fullpage}
\usepackage{enumitem}
\usepackage{graphicx}
\usepackage{tikz}
\usepackage{hyperref}
\usepackage{centernot}


\newtheorem{theorem}{Theorem}
\newtheorem{corollary}[theorem]{Corollary}
\newtheorem{lemma}[theorem]{Lemma}
\newtheorem{definition}[theorem]{Definition}
\newtheorem{question}[theorem]{Question}
\newtheorem{problem}[theorem]{Problem}


\renewcommand{\emptyset}{\varnothing}
\newcommand{\FF}{\mathrm{FF}}
\newcommand{\QQ}{\mathcal{Q}}
\newcommand{\C}{\mathcal{C}}
\newcommand{\D}{\mathcal{D}}

\newcommand{\PP}{\mathcal{P}}
\newcommand{\CS}{\mathcal{S}}
\newcommand{\st}{\colon\,}
\newcommand{\comment}[1]{}
\DeclareMathOperator{\incomparable}{\parallel}
\newcommand{\seriescomp}{\olessthan}

\newcommand{\chain}[1]{\underline{#1}}
\newcommand{\twochain}{\chain{2}}
\newcommand{\twoPtwo}{\twochain + \twochain}
\newcommand{\aboveset}{A}
\newcommand{\belowset}{B}
\newcommand{\wall}{\hat}

\newcount\tmpL
\newcount\outL
\newcommand{\addL}[2]
{
	\outL=#1
	\tmpL=#2
	\advance\outL by\tmpL
}

\usetikzlibrary{calc}

\tikzstyle vertex=[circle,fill=black!35,inner sep=1pt]
\tikzstyle edge=[draw,line width=0.8pt]
\tikzset{blob/.style={rectangle,rounded corners,minimum height=0.75cm,minimum width=1.5cm,draw}}

\newcommand{\drawcrosses}[2]
{
	\begin{scope}[every path/.style={black!20}]
		\foreach \lowertheta in {45, 75, 105, 135}
		{{
			\foreach \uppertheta in {225, 255, ..., 315}
			{{
				\draw[edge] (#1.\lowertheta) -- (#2.\uppertheta) ;
			}}
		}}
	\end{scope}
}

\newcommand{\drawcrossestwo}[2]
{
	\begin{scope}[every path/.style={black!20}]
		\foreach \lowertheta in {45, 75, 105, 135}
		{{
				\draw[edge] (#1.\lowertheta) -- (#2) ;
		}}
	\end{scope}
}

\begin{document} 

\author{Kevin G. Milans, Michael C. Wigal}
\title{A Dichotomy Theorem for First-Fit Chain Partitions}
\date{\today}

\maketitle

\begin{abstract}
First-Fit is a greedy algorithm for partitioning the elements of a poset into chains.  Let $\FF(w,Q)$ be the maximum number of chains that First-Fit uses on a $Q$-free poset of width $w$.  A result due to Bosek, Krawczyk, and Matecki states that $\FF(w,Q)$ is finite when $Q$ has width at most $2$.  We describe a family of posets $\QQ$ and show that the following dichotomy holds: if $Q\in\QQ$, then $\FF(w,Q) \le 2^{c(\log w)^2}$ for some constant $c$ depending only on $Q$, and if $Q\not\in\QQ$, then $\FF(w,Q) \ge 2^w - 1$.
\end{abstract}

\section{Introduction}

A \emph{partially ordered set} or \emph{poset} is a pair $(P,\leq)$ where $P$ is a set and $\leq$ is an antisymmetric, reflexive, and transitive relation on $P$.  We use $P$ instead of $(P,\leq)$ when there is no ambiguity in simplifying this notation. We write $x > y$ when $x \geq y$ and $x \neq y$.  All posets in this paper are finite. 

Two points $x,y \in P$ are \emph{comparable} if $x \leq y$ or $y \leq x$. Otherwise, $x$ and $y$ are said to be \emph{incomparable}, denoted $x \incomparable y$. We say that $y$ covers $x$ if $y>x$  and there does not exist a point $z \in P$ such that $y > z > x$. A \emph{chain} $C$ is a set of pairwise comparable elements, and the \emph{height} of $P$ is the size of a maximum chain. An \emph{antichain} $A$ is a set of pairwise incomparable elements, and the \emph{width} of $P$ is the size of a maximum antichain.

A \emph{chain partition} of a poset $P$ is a partition of the elements of $P$ into nonempty chains.  Dilworth's theorem states that for each poset $P$, the minimum size of a chain partition equals the width of $P$.  A \emph{Dilworth partition} of $P$ is a chain partition of $P$ of minimum size.  A poset $Q$ is a \emph{subposet} of $P$ if $Q$ can be obtained from $P$ by deleting elements.  We say that $P$ is \emph{$Q$-free} if $Q$ is not a subposet of $P$.

First-Fit is a simple algorithm that constructs an ordered chain partition of a poset $P$ by processing the elements of $P$ in a given \emph{presentation order}.  Suppose that First-Fit has already partitioned $\{x_1,\ldots,x_{k-1}\}$ into chains $(C_1, \ldots, C_t)$.  First-Fit then assigns $x_k$ to the first chain $C_j$ such that $C_j \cup \{x_k\}$ is a chain; if necessary, we introduce a new chain $C_{t+1}$ containing only $x_k$.  

We are concerned with the efficiency of the First-Fit algorithm.  A classical example due to Kierstead and Smith~\cite{kierstead} shows that First-Fit may use arbitrarily many chains even on posets of width $2$.  However, Bosek, Krawczyk, and Matecki~\cite{BKM} proved that for each fixed poset $Q$ of width at most $2$, the number of chains used by First-Fit on a $Q$-free poset $P$ is bounded in terms of the width of $P$.  Let $\FF(w,Q)$ be the maximum, over all $Q$-free posets $P$ of width $w$ and all presentation orders of $P$, of the number of chains that First-Fit uses.  The upper bound on $\FF(w,Q)$ given by Bosek, Krawczyk, and Matecki's can be as large as a tower of $w$'s with a height that is linear in $|Q|$.
\comment{
From their paper, g(w,s,t) = min k such that every wall of size k contains an antichain of size w or an (s,t)-ladder.  An (s,t)-ladder has s groups of size t; our ladders are (2,t)-ladders.  Define f(w,t)=g(w+1,2,t) and we have g(w+1,2,t) \ge w^{4w}.  We also have g(w+1,s,t) is at least as large as a tower of w's of height s (in fact, probably taller).  So when Q is the (s,2)-ladder, the bound on FF(w,Q) has this behavior.
}

\subsection{Prior work}

Aside from the result of Bosek, Krawczyk, and Matecki~\cite{BKM}, prior work has focused on establishing bounds on $\FF(w,Q)$ when $Q$ is a particular poset of interest.  We outline the history briefly.

A well-known result in graph theory states that if $G$ has no induced path on $4$ vertices, then the greedy algorithm produces a proper vertex-coloring of $G$ using at most $\omega(G)$ colors, where $\omega(G)$ is the clique number of $G$.  Applied to the incomparibility graph of a poset $P$, this implies that $\FF(w,N) = w$, where $N$ is the $4$-element poset with points $\{a,b,c,d\}$ and relations $a<c$ and $b<c,d$.

Let $\chain{r}$ denote the chain with $r$ elements.  The disjoint union of posets $P$ and $Q$ is denoted $P+Q$, with each element in $P$ incomparable to every element in $Q$.  An \emph{interval order} is a poset whose elements are closed intervals with $[x_1,x_2] < [y_1,y_2]$ if and only if $x_2<y_1$.  Fishburn~\cite{Fishburn} proved that a poset $P$ is an interval order if and only if $P$ is $(\twoPtwo)$-free.  The problem of determining the performance of First-Fit on interval orders is still open, despite significant efforts by various different research groups over the years.  Currently, the best known bounds are $(5-o(1))w \le \FF(w, \twoPtwo) \le 8w$.  The lower bound is due to Kierstead, D. Smith, and Trotter~\cite{KST}.  The upper bound is due to Brightwell, Kierstead, and Trotter (unpublished), and independently Narayanaswamy and Babu~\cite{NB}, who improved on the breakthrough column construction method due to Pemmaraju, Raman, and Varadarajan~\cite{PRV}.

The interval orders are the $(\twoPtwo)$-free posets; we obtain a larger class of posets by forbidding the disjoint union of longer chains.  Bosek, Krawczyk, and Szczypka~\cite{poly2chains} showed that when $r \geq s$, $\FF(w,\chain{r} + \underline{s}) \leq (3r - 2)(w - 1)w + w$.  Joret and Milans~\cite{2chains} improved the bound to $\FF(w,(\chain{r} + \chain{s})) \leq 8(r -1)(s - 1)w$.  Dujmovi\'c, Joret, and Wood~\cite{2chains2} further improved the bound to  $\FF(w,(\chain{r} + \chain{r})) \leq  8(2r - 3)w$, which is best possible up to the constants.  

The \emph{ladder} of height $n$, denoted $L_n$, consists of two disjoint chains $x_1 < \cdots < x_n$ and $y_1 < \cdots < y_n$ with $x_i\le y_j$ if and only if $i\le j$ and no relations of the form $y_i\le x_j$.  Kiearstead and Smith~\cite{kierstead} showed that $\FF(w,L_2) = w^2$ and  $\FF(2,L_n)\le 2n$.  They also proved the general bound $\FF(w,L_n)\le w^{\gamma(\lg(w)+\lg(n))}$, where $\lg(x)$ denotes the base-2 logarithm; this result plays an important role in our main theorem.

\subsection{Our Results}

Our aim is to say something about the behavior of $\FF(w,Q)$ in terms of the structure of $Q$.  We obtain subexponential bounds on $\FF(w,Q)$ when $Q$ belongs to a particular family of posets $\QQ$, and we also give an exponential lower bound on $\FF(w,Q)$ when $Q\not\in \QQ$.  From the point of view of the First-Fit algorithm, efficiency is vastly improved if a single poset in $\QQ$ is forbidden.  From the point of view of an adversary, forcing First-Fit to use exponentially many chains requires all posets in $\QQ$ to appear. 

For each $x \in P$, we define the \emph{above set} of $x$, denoted $\aboveset(x)$, to be $\{y\in P \st y > x\}$; also, when $S$ is a set of points, we define $\aboveset(S)$ to be $\bigcup_{x\in S} \aboveset(x)$.  Similarly, the \emph{below set} of $x$, denoted $\belowset(x)$, is $\{y \in P\st y < x\}$ and we extend this to sets via $\belowset(S)=\bigcup_{x\in S}\belowset(x)$. We define $\aboveset[x] = \aboveset(x) \cup \{x\}$ and similarly for $\belowset[x]$. The \emph{series composition} of posets $S_1, \ldots, S_n$, denoted $S_1 \seriescomp \cdots \seriescomp S_n$, produces a poset $S$ which has disjoint copies of $S_1, \ldots, S_n$ arranged so that $x < y$ whenever $x \in S_i$, $y \in S_j$ and $i < j$.  The \emph{blocks} of $S$ are the subposets $S_1,\ldots,S_n$.

\section{Dichotomy Theorem}

A poset is \emph{ladder-like} if its elements can be partitioned into two chains $C_1$ and $C_2$ such that if $(x,y)\in C_1\times C_2$ and $x$ is comparable to $y$, then $x < y$.  Our first lemma shows that every ladder-like poset is contained in a sufficiently large ladder.


\begin{lemma}\label{lem:ladder-like}
If $P$ is a ladder-like poset of size $n$, then $P$ is a subposet of $L_n$. 
\end{lemma}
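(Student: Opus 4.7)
The plan is to construct an order embedding $\phi\colon P \to L_n$ that places the elements of $C_1$ onto the $x$-chain of $L_n$ and the elements of $C_2$ onto the $y$-chain. Since $|C_1| + |C_2| = n$ and each chain of $L_n$ has $n$ positions, there is enough room, provided we pick positions that encode the cross-comparabilities correctly.

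Write $C_1 = \{a_1 < \cdots < a_p\}$ and $C_2 = \{b_1 < \cdots < b_q\}$. I would first produce a linear extension $\le_L$ of $P$ with the additional property that whenever $a_i \in C_1$ and $b_j \in C_2$ are incomparable in $P$, one has $b_j <_L a_i$. Then, letting $\mathrm{pos}(z)$ denote the rank of $z$ in $\le_L$, I would define $\phi(a_i) = x_{\mathrm{pos}(a_i)}$ and $\phi(b_j) = y_{\mathrm{pos}(b_j)}$.

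To build the linear extension, let $s_i = \min\{j \st a_i < b_j\}$, with $s_i = q+1$ if no such $j$ exists. Transitivity together with the chain order on $C_1$ yields $s_1 \le \cdots \le s_p$. Process rounds $j=1, \ldots, q$: at the start of round $j$, append (in order of $i$) all not-yet-appended $a_i$ with $s_i \le j$, and then append $b_j$; finally append the remaining $a_i$'s (those with $s_i = q+1$). A routine check shows this is a linear extension of $P$, and the incomparability property holds because if $a_i \incomparable b_j$ then $s_i > j$, so $a_i$ is only appended after round $j$ has finished.

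Verifying that $\phi$ is an order embedding then unpacks directly. The maps $i \mapsto \mathrm{pos}(a_i)$ and $j \mapsto \mathrm{pos}(b_j)$ are strictly increasing, which preserves the two chain orders. For cross pairs: if $a_i < b_j$ in $P$ then $\mathrm{pos}(a_i) < \mathrm{pos}(b_j)$, giving $\phi(a_i) < \phi(b_j)$ in $L_n$; and if $a_i \incomparable b_j$ in $P$ then $\mathrm{pos}(b_j) < \mathrm{pos}(a_i)$, so $\phi(a_i)$ and $\phi(b_j)$ are incomparable in $L_n$. The ladder-like hypothesis rules out $b_j < a_i$ relations in $P$, matching the absence of such relations in $L_n$. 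The only mildly subtle point is the explicit construction of the linear extension so that incomparable cross pairs get placed with $b_j$ first; everything else is bookkeeping.
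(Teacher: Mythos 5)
Your proof is correct, but it takes a different route from the paper. You build the embedding directly: choose a linear extension of $P$ in which every incomparable pair $(a_i,b_j)\in C_1\times C_2$ has $b_j$ preceding $a_i$ (your round-by-round interleaving, justified by the monotonicity $s_1\le\cdots\le s_p$ and the ladder-like hypothesis excluding relations $b_j<a_i$), and then send $C_1$ to the $x$-chain and $C_2$ to the $y$-chain of $L_n$ at the positions given by rank. The verification is sound: comparable cross pairs get $\mathrm{pos}(a_i)<\mathrm{pos}(b_j)$, hence $x_{\mathrm{pos}(a_i)}<y_{\mathrm{pos}(b_j)}$, while incomparable cross pairs get $\mathrm{pos}(b_j)<\mathrm{pos}(a_i)$, hence incomparability in $L_n$, and within each chain the map is strictly increasing. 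The paper instead argues by induction on $n$: if $P$ has a maximum element $u$, embed $P-u$ in $L_{n-1}$ and let $y_n$ play the role of $u$; otherwise remove the set $S$ of elements of $C_1$ incomparable to $\max C_2$, embed $P-S$ in $L_{n-s}$, and place $S$ on $x_{n-s+1},\ldots,x_n$. Your construction has the advantage of producing an explicit embedding in one pass with a transparent invariant (incomparable cross pairs are separated by position), at the cost of setting up the linear-extension machinery; the paper's induction is shorter to state and dispatches the lemma in two cases without any explicit bookkeeping. Both correctly exploit the same structural fact, namely that in a ladder-like poset no element of $C_2$ lies below an element of $C_1$, mirroring the absence of relations $y_i\le x_j$ in $L_n$.
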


\begin{proof}
Let $P$ be a ladder-like poset of size $n$.  Clearly the $1$-element poset is a subposet of $L_1$, and so we may assume $n\ge 2$.  Let $C_1$ and $C_2$ be a chain partition of $P$ such that whenever $(x,y)\in C_1 \times C_2$ and $x$ and $y$ are comparable, we have $x < y$.  Suppose that $P$ has a maximum element $u$.  Recall that $L_n$ consists of chains $x_1 < \cdots < x_n$ and $y_1 < \cdots < y_n$ with $x_i\le y_j$ if and only if $i\le j$.  By induction, $P - u$ can be embedded into the copy of $L_{n-1}$ in $L_n$ induced by $\{x_1,\ldots,x_{n-1}\}\cup\{y_1,\ldots,y_{n-1}\}$.  Allowing $y_n$ to play the role of $u$ completes a copy of $P$ in $L_n$.  Next, suppose that $P$ has no maximum element.  Let $u=\max C_2$, let $S = \{v\in C_1\st v\incomparable u\}$, and let $s=|S|$.  Since $P$ has no maximum element, it follows that $s\ge 1$.  By induction, $P - S$ can be embedded in the copy of $L_{n-s}$ in $L_n$ induced by $\{x_1,\ldots,x_{n-s}\}\cup\{y_1,\ldots,y_{n-s}\}$.  Allowing $\{x_{n-s+1},\ldots,x_n\}$ to play the role of $S$ completes a copy of $P$ in $L_n$.
\end{proof}

The performance of First-Fit on a poset $P$ can be analyzed using a static structure.  A \emph{wall} of a poset $P$ is an ordered chain partition $(C_1, \ldots, C_t)$ such that for each element $x\in C_j$ and each $i<j$, there exists $y\in C_i$ such that $y\incomparable x$.  It is clear that every ordered chain partition produced by First-Fit is a wall, and conversely, each wall $W$ of $P$ is output by First-Fit when the elements of $P$ are presented in order according to $W$.  Hence, the worst-case performance of First-Fit on $P$ is equal to the maximum size of a wall in $P$.  A \emph{subwall} of a wall $W$ is obtained from $W$ by deleting zero or more of the chains in $W$.  Note that if $W$ is a wall of $P$, then each subwall of $W$ is a wall of the corresponding subposet of $P$.

For each positive integer $k$, we construct a poset called the \emph{reservoir} of width $k$, denoted $R_k$, and a corresponding wall $W_k$ of size $2^k-1$.  The reservoirs provide an example of a family of posets which are good at avoiding subposets and yet still have exponential First-Fit performance.  

\begin{theorem}\label{thm:reservoir_bound}
For each $k\ge 1$, the reservoir $R_k$ has width $k$ and a wall $W_k$ of size $2^k-1$. 
\end{theorem}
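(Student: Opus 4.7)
The plan is to define the reservoir $R_k$ and the wall $W_k$ together recursively and prove both claims simultaneously by induction on $k$. The base case $k=1$ is immediate: $R_1$ is a single point of width $1$, and $W_1$ is the corresponding one-chain wall of size $2^1-1=1$.

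For the inductive step, I would construct $R_k$ by taking two disjoint copies $R^A$ and $R^B$ of $R_{k-1}$ together with one additional ``linking'' chain $M_k$, with the cross-copy comparabilities calibrated in a ladder-like fashion. The wall $W_k$ would be defined as the concatenation $W_{k-1}^A, M_k, W_{k-1}^B$, of total length $2(2^{k-1}-1)+1=2^k-1$. The construction must be designed so that any antichain of $R_k$ uses elements from only one copy of $R_{k-1}$, plus at most one extra element from $M_k$ or the opposite copy; combined with the inductive hypothesis, this gives width at most $(k-1)+1=k$.

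The verification of the wall property splits into three subchecks. First, $W_{k-1}^A$ and $W_{k-1}^B$ must remain walls of their respective copies inside $R_k$, which is immediate from the inductive hypothesis since we only add relations between the two copies. Second, each element of $M_k$ must admit an incomparable witness in every chain of $W_{k-1}^A$; this is a constraint that can be engineered directly into the definition of $M_k$. Third, each element of each chain in $W_{k-1}^B$ must admit an incomparable witness both in every chain of $W_{k-1}^A$ and in $M_k$. This third check is the main obstacle and the essential design problem for the reservoir: the construction must introduce enough incomparabilities across the interface between $R^A$ and $R^B$ to certify the $2^{k-1}-1$ distinct witnesses required by every element of $R^B$, yet few enough that no antichain of size $k+1$ appears. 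I expect the interface to be a ladder-like arrangement, with $M_k$ serving as a ``frontier'' whose elements trade incomparability with the earlier chains of $W_{k-1}^A$ for comparability as one moves up $M_k$, and with the order between $R^A$ and $R^B$ respecting a compatible ladder-like indexing so that incomparability witnesses propagate correctly.
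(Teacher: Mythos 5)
There is a genuine gap, and in fact the proposed architecture cannot work as stated. First, the construction is never actually given: the cross-copy comparabilities and the relations of $M_k$ are deferred (``can be engineered,'' ``I expect the interface to be a ladder-like arrangement''), and you explicitly label the main wall verification ``the essential design problem.'' Since the reservoir is \emph{defined} by this construction (and its precise structure is what Lemma~\ref{lem:reservoir} later analyzes), leaving the interface unspecified leaves the theorem unproved. Second, and more seriously, the skeleton you fix — wall equal to the concatenation $W_{k-1}^A, M_k, W_{k-1}^B$ with $M_k$ a single chain — is already impossible at $k=2$. There the copies are single points $a$ and $b$ and the wall is $(\{a\}, M_2, \{b\})$. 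The wall condition forces $a\incomparable c$ for every $c\in M_2$, forces $a\incomparable b$, and forces $b\incomparable c^*$ for some $c^*\in M_2$; hence $\{a,b,c^*\}$ is a $3$-element antichain, contradicting width $2$. The same tension appears for all $k$: the witnesses that elements of copy $B$ must have in every chain of $W_{k-1}^A$ and in $M_k$ create antichains mixing the two copies with $M_k$, which is exactly what your intended width argument (``only one copy plus at most one extra element'') needs to exclude. So the induction fails at its first step, independently of how the interface is ``calibrated.''

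The paper's construction avoids this by not using two full copies of $R_{k-1}$. Writing $W_{k-1}=(C_1,\ldots,C_m)$ with $m=2^{k-1}-1$ and letting $S_i$ be the subposet under the prefix subwall $(C_1,\ldots,C_i)$, the lower part is the series tower $S_m\seriescomp\cdots\seriescomp S_0$ (width at most $k-1$ despite its height), with a single full copy of $R_{k-1}$ on top; the auxiliary chain $X=\{x_{m+1}<\cdots<x_1\}$ has $x_i$ above exactly $S_i\cup\cdots\cup S_m$ and incomparable to the top copy. Crucially, $X$ is not one chain of the wall: its $m+1$ elements end up as tops of $m+1$ distinct wall chains (the presentation $\wall S_m, x_{m+1},\ldots,\wall S_0,x_1$ forces $x_j$ onto color $j$), and the top copy of $R_{k-1}$ then uses the $x_j$'s themselves as its incomparability witnesses while contributing $m$ further chains via $W_{k-1}$. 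This is how one gets $2m+1=2^k-1$ wall chains while the width grows only by the single chain $X$, i.e., to $k$. Your proposal would need to be redesigned along these lines (replace the lower copy by the prefix tower and spread the linking chain across many wall chains) rather than completed as written.
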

\begin{proof}

Let $R_1$ be the $1$-element poset, and let $W_1$ be the chain partition of $R_1$.  For $k\ge 2$, we first construct $R_k$ using $R_{k-1}$ and $W_{k-1}$.  Then, we give a presentation order for $R_k$ which forces First-Fit to use at least $2^k-1$ chains.  Let $W_{k-1} = (C_1,\ldots, C_m)$ where $m=2^{k-1}-1$, and for $0\le i\le m$, let $\wall S_i$ be the subwall $(C_1, \ldots, C_i)$ with corresponding subposet $S_i$.  (Although $S_0$ and $\wall S_0$ are empty, they are convenient for describing $R_k$.)   Let $S = S_m \seriescomp S_{m-1}\seriescomp \cdots \seriescomp S_0 \seriescomp R_{k-1}$.  The poset $R_k$ consists of a copy of $S$ and a chain $X$ where $X = \{x_{m+1} < \cdots < x_1\}$ and each $x_i$ satisfies $\aboveset(x_i) \cap  S = \emptyset$ and $\belowset(x_i) \cap S =  S_i \cup \cdots \cup  S_m$.  See Figure~\ref{fig:reservoir}.  

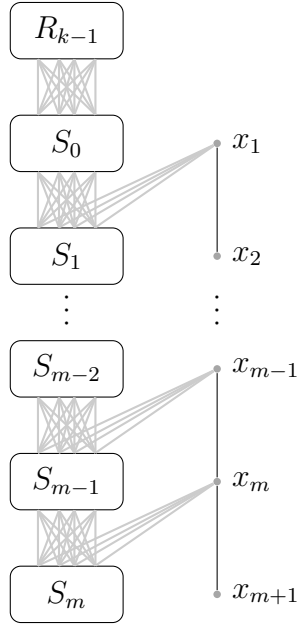
\begin{figure}
\begin{center}
\begin{tikzpicture}[yscale=0.75]
        
		\node[blob] (R) at (0,4) {$R_{k-1}$};
		\node[blob] (s0) at (0,2) {$S_0$} ;
		\node[blob] (s1) at (0,0) {$S_{1}$} ;
		\node[blob] (sm-2) at (0,-2) {$S_{m-2}$} ;
		\node[blob] (sm-1) at (0,-4) {$S_{m -1}$} ;
		\node[blob] (sm) at (0,-6) {$S_{m}$} ;
        
		\node[vertex, label=right:{$x_1$}] (x1) at (2,2) {};
		\node[vertex, label=right:{$x_{2}$}] (x2) at (2,0) {};
		\node[vertex, label=right:{$x_{m-1}$}] (xm-1) at (2,-2) {};
		\node[vertex, label=right:{$x_{m}$}] (xm) at (2,-4) {};
		\node[vertex, label=right:{$x_{m + 1}$}] (xm+1) at (2,-6) {};
		
		\path (s1) -- (sm-2) node [black, font=\small, midway, sloped] {$\dots$};
		\path (x2) -- (xm-1) node [black, font=\small, midway, sloped] {$\dots$};
		
		\draw (x1) -- (x2);
		\draw (xm-1) -- (xm) -- (xm+1);
        
        \drawcrosses{s0}{R};
        \drawcrosses{s1}{s0};
        \drawcrosses{sm-1}{sm-2};
        \drawcrosses{sm}{sm-1};
        \drawcrossestwo{s1}{x1};
        \drawcrossestwo{sm-1}{xm-1};
        \drawcrossestwo{sm}{xm};
\end{tikzpicture}
\caption{Reservoir Construction}\label{fig:reservoir}
\end{center}
\end{figure}

Note that since $S$ is a series composition of posets of width at most $k-1$, it follows that $S$ has width at most $k-1$.  Adding $X$ increases the width by at most $1$, and so $R_k$ has width at most $k$.  An antichain in the top copy of $R_{k-1}$ of size $k-1$ and $x_1$ form an antichain in $R_k$ of size $k$.

It remains to show that First-Fit might use as many as $2^k-1$ chains to partition $R_k$.  Consider the partial presentation order given by $\wall S_m, x_{m+1}, \wall S_{m-1}, x_m, \ldots, \wall S_1, x_2, \wall S_0, x_1$.  We claim that First-Fit assigns color $j$ to $x_j$ for $1 \le j\le m+1$.  Indeed, when $\hat S_{j-1}$ is presented, the points in $S_{j-1}$ are above all previously presented points except $\{x_{j+1}, \ldots, x_{m+1}\}$, which have already been assigned colors larger than $j$.  It follows that First-Fit uses colors $\{1,\ldots,j-1\}$ on $S_{j-1}$.  Next, $x_j$ is presented; since $x_j$ is above all previously presented points except those in $ S_{j-1}$, it follows that First-Fit assigns color $j$ to $x_j$.  

In the final stage, we present the top copy of $R_{k-1}$ in order given by $W_{k-1}$.  This copy of $R_{k-1}$ is incomparable to each point in $X$ and it follows that First-Fit uses $m$ new colors on these points.  In total, First-Fit uses $(m+1)+m$ colors, and $2m+1=2^k-1$.
\end{proof}

If $Q$ is a poset such that $\FF(w,Q)$ is subexponential in $w$, then Theorem~\ref{thm:reservoir_bound} implies that $Q$ is a subposet of a sufficiently large reservoir $R_k$.  These posets have a nice description.

\begin{definition}
  Let $\QQ$ be the minimal poset family which contains the ladder-like posets and is closed under series composition. 
\end{definition}

Our next lemma shows that $\QQ$ characterizes the posets of width $2$ that appear in reservoirs.

\begin{lemma}\label{lem:reservoir}
Let $Q$ be a poset of width $2$.  Some reservoir $R_k$ contains $Q$ as a subposet if and only if $Q\in\QQ$.
\end{lemma}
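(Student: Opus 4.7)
I will prove both implications by induction, starting with the easier reverse direction. For that, I induct on the construction of $Q\in\QQ$. When $Q$ is ladder-like, Lemma~\ref{lem:ladder-like} gives $Q\subseteq L_n$ for $n=|Q|$, so it suffices to show that $R_k$ contains $L_{2^{k-1}-1}$. I plan to do this by picking one element $y_i$ from each chain $C_i$ of $W_{k-1}$: the series structure of $S$ makes $y_m<\cdots<y_1$ a chain, and the relation $x_j>y_i$ iff $i\ge j$ together with the chain $X$ exhibits a ladder after a straightforward reindexing. For the inductive step $Q=Q_1\seriescomp Q_2$, I appeal to the inductive hypothesis to place each $Q_i$ in some reservoir; since $R_{k-1}$ appears as the top block of $R_k$, the reservoirs are nested, so I may assume a common $R_k$ contains both $Q_i$. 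The construction of $R_{k+1}$ has $S_m=R_k$ at the bottom of $S$ and a fresh copy of $R_k$ at the top, so $R_k\seriescomp R_k\subseteq R_{k+1}$, giving $Q_1\seriescomp Q_2\subseteq R_{k+1}$.

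\medskip

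For the forward direction, I induct on $k$; the base $k=1$ is trivial. For the step, let $T=Q\cap X$. If $T=\emptyset$, then $Q\subseteq S$ inherits the series decomposition $(Q\cap S_m)\seriescomp\cdots\seriescomp(Q\cap S_0)\seriescomp(Q\cap R_{k-1})$, each factor a subposet of $R_{k-1}$, hence in $\QQ$ by the inductive hypothesis. If $T\neq\emptyset$, write $T=\{x_{i_1}>\cdots>x_{i_r}\}$ with $i_1<\cdots<i_r$, and split $Q\cap S$ into a lower piece $Q_{\mathrm{low}}=Q\cap(S_{i_r}\cup\cdots\cup S_m)$ of elements below all of $T$, and an upper piece $Q_{\mathrm{high}}=Q\cap(S_0\cup\cdots\cup S_{i_r-1}\cup R_{k-1})$. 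Then $Q=Q_{\mathrm{low}}\seriescomp(T\cup Q_{\mathrm{high}})$: every element of $Q_{\mathrm{low}}$ lies below $T$ (by the comparabilities between $X$ and $S$) and below $Q_{\mathrm{high}}$ (by the series structure of $S$). The lower piece $Q_{\mathrm{low}}$ is a series composition of subposets of $R_{k-1}$, each in $\QQ$ by the inductive hypothesis.

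\medskip

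The hard part will be showing $T\cup Q_{\mathrm{high}}\in\QQ$. Here the width-$2$ hypothesis is essential: every element of $Q_{\mathrm{high}}$ is incomparable to $x_{i_r}$, so if any single block $Q\cap S_j$ (with $j<i_r$) or $Q\cap R_{k-1}$ contained an incomparable pair, that pair together with $x_{i_r}$ would form a $3$-antichain. Hence each such block is a chain, and because the blocks are arranged in series within $S$, $Q_{\mathrm{high}}$ is itself a chain. Finally, every comparable pair $(q,t)\in Q_{\mathrm{high}}\times T$ satisfies $q<t$: no element of $S$ lies above any $x_{i_l}$, so comparability forces $q<t$. This makes $T\cup Q_{\mathrm{high}}$ ladder-like with chain partition $(Q_{\mathrm{high}},T)$, placing it in $\QQ$; combining with $Q_{\mathrm{low}}$ via series composition yields $Q\in\QQ$.
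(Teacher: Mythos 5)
Your proof is correct, but the forward direction takes a genuinely different route from the paper. The paper inducts on $|Q|$: it takes $k$ minimal with $Q\subseteq R_k$, picks a maximal incomparable pair $y,z$ in $Q\cap S$, shows every element of $Q$ above one of $y,z$ is above both (maximality inside $S$, block structure for $X$), and uses width $2$ to split $Q=(\belowset[y]\cup\belowset[z])\seriescomp(\aboveset(y)\cup\aboveset(z))$, with the minimality of $k$ needed in the degenerate case to guarantee a nontrivial series split inside $S$. You instead induct on $k$ and exploit the explicit reservoir structure directly: cutting at the block index $i_r$ of the minimum element of $Q\cap X$, you get $Q=Q_{\mathrm{low}}\seriescomp(T\cup Q_{\mathrm{high}})$, handle $Q_{\mathrm{low}}$ blockwise by the inductive hypothesis, and observe that each block of $Q_{\mathrm{high}}$ is a chain because its elements are all incomparable to $x_{i_r}$ (width $2$), so $(Q_{\mathrm{high}},T)$ witnesses that the top part is ladder-like. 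Your argument needs no minimality of $k$ and no maximal-pair analysis, at the cost of leaning on the concrete block description of $R_k$; the paper's argument is more intrinsic to $Q$. Your reverse direction matches the paper's (ladder-like case via Lemma~\ref{lem:ladder-like}, series step via nested reservoirs and $R_k\seriescomp R_k\subseteq R_{k+1}$), and you additionally spell out the embedding $L_{2^{k-1}-1}\subseteq R_k$ that the paper leaves implicit. Two small polish items: state the induction as ``every nonempty subposet of $R_k$ of width at most $2$ lies in $\QQ$'' (or note explicitly that width-$1$ factors are chains, hence ladder-like and in $\QQ$), since the lemma as stated only covers width exactly $2$; and note that empty factors $Q\cap S_j$ are simply dropped from the series decompositions.
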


\begin{proof}
If $Q$ is ladder-like and has $t$ elements, then $Q$ is a subposet of $L_t$ by Lemma~\ref{lem:ladder-like}, and $L_t$ is a subposet a sufficiently large reservoir.  Suppose that $Q=Q_1\seriescomp Q_2$ for some $Q_1,Q_2\in\QQ$ with $|Q_1|,|Q_2| < |Q|$.  By induction, $Q_1$ and $Q_2$ are subposets of $R_k$ for some $k$.  Since $R_{k+1}$ contains the series composition of two copies of $R_k$, it follows that $Q$ is a subposet of $R_{k+1}$. 

Let $Q$ be a poset of width $2$ that is contained in some reservoir.  We show that $Q\in\QQ$ by induction on $|Q|$.  Let $k$ be the least positive integer such that $Q \subseteq R_k$, and let $S_0, \ldots, S_m$, $S$, and $X$ be as in the definition of $R_k$.  If $Q\cap S$ is a chain, then $(Q\cap S, Q\cap X)$ is a chain partition witnessing that $Q$ is ladder-like, and so $Q\in\QQ$.  Let $y,z$ be a maximal incomparable pair in $Q\cap S$, meaning that if $y',z'\in Q\cap S$, $y'\ge y$, $z'\ge z$ and $(y',z') \ne (y,z)$, then $y'$ and $z'$ are comparable.  We claim that if $u\in Q$ and $u$ is above one of $\{y,z\}$, then $u$ is above both $y$ and $z$.  This holds for $u\in Q\cap S$ by maximality of the pair $y,z$.  This holds for $u\in Q\cap X$ since $y\incomparable z$ implies that $y$ and $z$ belong to the same block in $S$, and all comparison relations between $u\in X$ and elements in $S$ depend only on their block in $S$.

Since $Q$ has width $2$, it follows that $Q = Q_1\seriescomp Q_2$ where $Q_1 =  \belowset [y] \cup \belowset [z]$ and $Q_2= \aboveset (y) \cup \aboveset (z)$.  Unless $Q_2$ is empty and $Q_1 = Q$, it follows by induction that $Q_1,Q_2\in\QQ$ and therefore $Q\in \QQ$ also.  Suppose that no point in $Q$ is above $y$ or $z$.  Since no point in $X$ is below a point in $S$, it follows that $Q\cap X = \emptyset$, or else a point in $Q\cap X$ would complete an antichain of size $3$ with $\{y,z\}$.  

Therefore $Q\subseteq S$.  Note that $Q$ is not contained in one of the blocks in $S$ by minimality of $k$ since each such block is a subposet of $R_{k-1}$.  It follows that $Q=Q_1\seriescomp Q_2$ for posets $Q_1$ and $Q_2$ with $|Q_1|,|Q_2|<|Q|$.  By induction, $Q_1,Q_2\in\QQ$ and so $Q\in\QQ$ also. 
\end{proof}

As a consequence of Lemma~\ref{lem:reservoir} and Theorem~\ref{thm:reservoir_bound}, it follows that $\FF(w,Q) \ge 2^w-1$ when $Q\not\in\QQ$.  It turns out that the performance of First-Fit is subexponential when $Q\in\QQ$.  Our next theorem shows how upper bounds on $\FF(w,Q_1)$ and $\FF(w,Q_2)$ can be used to obtain an upper bound on $\FF(w,Q_1\seriescomp Q_2)$.  A \emph{Dilworth coloring} of a poset $P$ of width $w$ is a function $\varphi\st P\to [w]$, where $[w]=\{1,\ldots,w\}$ such that the preimages of $\varphi$ form a Dilworth partition. 

\begin{theorem}\label{thm:series}
Let $Q_1$ and $Q_2$ be posets, let $w$, $s$, and $t$ be integers such that $\FF(w,Q_1)<s$ and $\FF(w,Q_2)<t$, and let $Q=Q_1\seriescomp Q_2$.  We have $\FF(w,Q) \le stw^2 + (s+t)w$.
\end{theorem}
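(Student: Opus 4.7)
The plan is to prove the contrapositive: a wall $W = (C_1, \ldots, C_N)$ in a poset $P$ of width $w$ with $N > stw^2 + (s+t)w$ must contain a copy of $Q = Q_1 \seriescomp Q_2$. The guiding observation is that if $y \le z$ are elements of $P$ with $\belowset(y) \supseteq Q_1$ and $\aboveset(z) \supseteq Q_2$, then $Q \subseteq P$, since every element of $\belowset(y)$ lies below every element of $\aboveset(z)$. This converts into a usable criterion via the paper's notion of subwall: for any $y \in P$, the subwall of $W$ obtained by keeping only $\{C_i : \max C_i < y\}$ is a wall of a subposet of $\belowset(y)$ of width at most $w$, so if it has size at least $s$ then by $\FF(w,Q_1) < s$ the subposet fails to be $Q_1$-free, yielding $\belowset(y) \supseteq Q_1$. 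The symmetric statement gives $\aboveset(z) \supseteq Q_2$ when the analogous upward subwall has size at least $t$.

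I would fix a Dilworth partition $D_1, \ldots, D_w$ of $P$, and for each wall chain $C_j$ set $y_j = \min C_j$, $z_j = \max C_j$, $b_j = |\{i : \max C_i < y_j\}|$, and $a_j = |\{i : \min C_i > z_j\}|$. Since $y_j \le z_j$, the key observation rules out $b_j \ge s$ and $a_j \ge t$ both holding. Assign $C_j$ a signature with two tags: the low tag is $*$ if $b_j \ge s$, otherwise the pair $(b_j, d(y_j)) \in \{0, 1, \ldots, s-1\} \times [w]$, where $d(y_j)$ is the Dilworth color of $y_j$; the high tag is defined symmetrically from $a_j, z_j$ with threshold $t$. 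With the doubly-saturated signature forbidden, the signature space has size $(sw+1)(tw+1) - 1 = stw^2 + (s+t)w$.

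It remains to show the signature map is injective, whence $N$ cannot exceed $stw^2 + (s+t)w$. Consider two wall chains $C_j, C_{j'}$ with $j < j'$ sharing a signature. In the doubly non-saturated case, the matching Dilworth colors place $y_j, y_{j'}$ in a common Dilworth chain (hence comparable), and likewise $z_j, z_{j'}$; the wall property forces $y_j < y_{j'}$ and $z_j > z_{j'}$, since otherwise every element of $C_j$ would be $\ge y_{j'}$ or $\le z_{j'}$, leaving $C_j$ with no incomparable witness against $y_{j'}$ or against $z_{j'}$. The equalities $b_j = b_{j'}$ and $a_j = a_{j'}$ then impose that no wall chain can have its max or min in the respective Dilworth-intervals $(y_j, y_{j'})$ or $(z_{j'}, z_j)$; combined with the wall property applied to the wall chains $C_i$ strictly between $C_j$ and $C_{j'}$, this rigidity should let us build a subwall of size $\ge s$ entirely in $\belowset(y_{j'})$ and a subwall of size $\ge t$ entirely in $\aboveset(z_j)$, yielding $Q \subseteq P$. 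The singly-saturated cases are similar but more direct, since one of $Q_1$, $Q_2$ is already in hand and the Dilworth/count information from the non-saturated side produces the other half. The main obstacle is the doubly non-saturated case: translating the rigidity imposed by matched signatures into the two required subwalls calls for a careful structural lemma about how the wall chains lying between $C_j$ and $C_{j'}$ thread through the Dilworth partition.
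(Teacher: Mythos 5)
Your setup is sound as far as it goes (the subwall-below-$y$ criterion for $\belowset(y)\supseteq Q_1$, the exclusion of the doubly saturated tag, the count $(sw+1)(tw+1)-1=stw^2+(s+t)w$, and the deduction $y_j<y_{j'}$, $z_{j'}<z_j$ from matched Dilworth colors), but the whole proof hinges on injectivity of your signature, and that claim is false; the ``careful structural lemma'' you defer to does not exist in the form you need. Counterexample: let $Q_1=Q_2=\twochain$, so $Q=\chain{4}$, take $w=2$, and note $\FF(2,\twochain)=2<3=s=t$. Let $P=\{y,m,p,z\}$ with $y<m<z$, $y<p<z$, and $m\incomparable p$; then $P$ has width $2$ and height $3$, hence is $Q$-free, and a Dilworth coloring may place $y,p,z$ on one chain and $m$ on the other. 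The ordered chain partition $\C=(\{y<m<z\},\{p\})$ is a wall, since $m\incomparable p$. Both chains have $b=a=0$, and the endpoints of both chains ($y,z$ for the first, $p$ for the second) carry the same Dilworth color, so both chains receive the signature $((0,1),(0,1))$. Here $\belowset(p)=\{y\}$ and $\aboveset(z)=\emptyset$, so matched signatures in your doubly non-saturated case do not force a copy of $Q$; the signature map is simply not injective, and the bound on $|\C|$ does not follow.

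The root of the failure is that you attach the signature to the endpoints $\min C_j,\max C_j$ of the entire wall chain and count whole chains lying below or above them: interior points of the earlier chain can then serve as the incomparability witnesses required by the wall property, so the configuration $y_j<y_{j'}\le z_{j'}<z_j$ is perfectly consistent and carries no useful rigidity. The paper instead measures, for each point, the longest descending (resp.\ ascending) chain that moves forward through the wall (the $\C$-height $h_\C$ and $\C$-depth $d_\C$), shows that $h_\C(x)\le s$ or $d_\C(x)\le t$ for every $x$ (this is where $\FF(w,Q_1)<s$ and $\FF(w,Q_2)<t$ are used, via a restricted wall built from the descending/ascending chain, which is stronger than your whole-chains-below count), splits $P$ into a lower part $L$ and upper part $U$, bounds the chains lying wholly inside $L$ or $U$ by antichain arguments ($\le sw+tw$), and assigns the signature $(\varphi(y),h_\C(y),\varphi(z),d_\C(z))$ only to crossing chains, evaluated at the cover pair $y=\max(C\cap L)$, $z=\min(C\cap U)$. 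Because $y<z$ is a cover relation within $C$, two crossing chains with equal signatures force $y<y'<z'<z$, and then no element of $C$ is incomparable to $y'$, contradicting the wall property outright. Repairing your argument essentially requires reintroducing this finer height/depth bookkeeping; the whole-chain counts $b_j,a_j$ and whole-chain endpoints are too coarse to make any such contradiction (or forced copy of $Q$) available.
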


\begin{proof}
For an ordered chain partition $\C$ of a poset $P$, an \emph{ascending $\C$-chain} is a chain $x_1 < \cdots < x_k$ such that the chain in $\C$ containing $x_i$ precedes the chain containing $x_j$ for $i<j$.  Similarly, a \emph{descending $\C$-chain} is a chain $x_1 > \cdots > x_k$ such that the chain in $\C$ containing $x_i$ precedes the chain containing $x_j$ for $i<j$.  The \emph{$\C$-depth} of a point $x$, denoted $d_\C(x)$, is the size of a maximum ascending $\C$-chain with bottom element $x$ and the \emph{$\C$-height} of a point $x$, denoted $h_\C(x)$, is the size of a maximum descending $\C$-chain with top element $x$.  

Let $P$ be a $Q$-free poset of width at most $w$, and let $\C$ be a wall of $P$.  We show that $|\C| \le stw^2 + (s+t)w$.  We claim that for each $x\in P$, at least one of the inequalities $h_\C(x) \le s$, $d_\C(x)\le t$ holds.  Otherwise, if $h_\C(x)\ge s+1$ and $d_\C(x)\ge t+1$, then we obtain a copy of $Q$ in $P$ as follows.  Let $x>y_1>y_2>\cdots>y_s$ be a descending $\C$-chain and let $x<z_1<z_2<\cdots < z_t$ be an ascending $\C$-chain.  Let $P_1$ be the subposet of $P$ consisting of all $u\in P$ such that for some $y_i$, the points $u$ and $y_i$ share a chain in $\C$ and $u \le y_i$.  Let $\C_1$ be the restriction of $\C$ to $P_1$ and observe that $\C_1$ is a wall of $P_1$.  Indeed, suppose that $C,C' \in \C_1$ where $C$ precedes $C'$, and let $(y_i,y_j) = (\max C, \max C')$.  Let $v\in C'$ and note that $v$ and $y_j$ share a chain in $\C$.  Let $u$ be a point in $P$ such that $u$ belongs to the same chain in $\C$ as $y_i$ and $u\incomparable v$.  Note that $u\le y_i$, since otherwise $u>y_i>y_j\ge v$, contradicting $u\incomparable v$.  Therefore $u\in P_1$ and $u\in C$.  Since $\C_1$ is a wall of $P_1$ of size $s$ and $s>\FF(w,Q_1)$, it follows that $P_1$ contains a copy of $Q_1$.  Similarly, we let $P_2$ be the subposet of $P$ consisting of all $u\in P$ such that for some $z_i$, the points $u$ and $z_i$ share a chain in $\C$ and $u \ge z_i$.  Restricting $\C$ to $P_2$ gives a wall $\C_2$ of size $t$ analogously, and since $t>\FF(w,Q_2)$, it follows that $P_2$ contains a copy of $Q_2$.  Since every element in $P_1$ is less than $x$ and $x$ is less than every element in $P_2$, it follows that $P$ contains a copy of $Q$.  

The \emph{lower part} of $P$, denoted by $L$, is $\{x\in P\st h_\C(x) \le s\}$ and the \emph{upper part} of $P$, denoted by $U$, is $P-L$.  Note that $\{L,U\}$ is a partition of $P$, that $h_\C(x) \le s$ for $x\in L$, and that $d_\C(x) \le t$ for $x\in U$.  Let $\C_U$ be the subwall of $\C$ consisting of all chains that are contained in $U$, and let $\C_{U,j}$ be the subwall of $\C_U$ consisting of the chains $C\in \C_U$ such that $d_\C(\min C) = j$.  We claim that the minimum elements of the chains in $\C_{U,j}$ form an antichain.  Suppose that $C,C'\in \C_{U,j}$ and that $C$ precedes $C'$.  Since $C$ precedes $C'$, it is not possible for $\min C > \min C'$.  Therefore if $\min C$ and $\min C'$ are comparable, then it must be that $\min C < \min C'$, and it would follow that $d_\C(\min C) > d_\C(\min C')$.  Hence $|\C_{U,j}| \le w$ for $1\le j\le t$ and so $|\C_U| \le tw$.  A symmetric argument shows that the sublist $\C_L$ consisting of all chains that are contained in $L$ satisfies $|\C_L| \le sw$.

It remains to bound the number of chains in $\C$ that contain points in both $U$ and $L$.  Let $\C_{LU}$ be the sublist of $\C$ consisting of these chains.  Note that for each $C\in \C$, we have that $y,z\in C$ and $y<z$ implies that $h_\C(y) \le h_\C(z)$ and $d_\C(y) \ge d_\C(z)$.  It follows that each point in $C\cap L$ is less than each point in $C\cap U$.  Let $\varphi\st P\to [w]$ be a Dilworth coloring.  For each $C\in \C_{LU}$ with $y=\max (C\cap L)$ and $z=\min (C \cap U)$, we assign to $C$ the \emph{signature} $(\varphi(y), h_\C(y), \varphi(z), d_\C(z))$.  We claim that the signatures are distinct.  Suppose that $C,C'\in \C_{LU}$ have the same signature and that $C$ precedes $C'$.  Let $y=\max (C\cap L)$, $z=\min (C\cap U)$, $y'=\max (C'\cap L)$, and $z' = \min (C' \cap U)$.  Note that $y<z$ is a cover relation in $C$ and $y'<z'$ is a cover relation in $C'$.  Since $\varphi(y)=\varphi(y')$, it follows that $y$ and $y'$ are comparable.  Since $h_\C(y) = h_\C(y')$, it must be that $y<y'$.  Since $\varphi(z)=\varphi(z')$, it follows that $z'$ and $z$ are comparable.  Since $d_\C(z') = d_\C(z)$, it must be that $z'<z$.  We now have that $y<z$ is a cover relation in $C$ but $y<y'<z'<z$ for points $z',y'$ that appear in a chain $C'$ that follows $C$, contradicting that $\C$ is a wall.

Since the assigned signatures are distinct, we have that $|\C_{LU}| \le stw^2$.  It follows that $|\C| \le |\C_{LU}| + |\C_L| + |\C_U| \le stw^2 + sw + tw$.
\end{proof}

\begin{corollary}\label{cor:series}
Let $Q=Q_1\seriescomp \cdots \seriescomp Q_k$.  If $\FF(w,Q_i) \le 2^{c_i(\lg w)^2}$ for $1\le i\le k$, then $\FF(w,Q) \le 2^{(c+6k)(\lg w)^2}$, where $c = \sum_{i=1}^k c_i$. 
\end{corollary}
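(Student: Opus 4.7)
The plan is to induct on $k$, using Theorem~\ref{thm:series} at each step.  The base case $k=1$ is immediate since $2^{c_1(\lg w)^2} \le 2^{(c_1+6)(\lg w)^2}$.

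For the inductive step with $k\ge 2$, write $Q = Q_1 \seriescomp Q'$ where $Q' = Q_2\seriescomp \cdots \seriescomp Q_k$.  Applying the inductive hypothesis to $Q'$ gives $\FF(w,Q') \le 2^{(c' + 6(k-1))(\lg w)^2}$ with $c' = c - c_1$.  Choose $s = \lfloor 2^{c_1(\lg w)^2}\rfloor + 1$ and $t = \lfloor 2^{(c'+6(k-1))(\lg w)^2}\rfloor + 1$, so that $\FF(w,Q_1) < s$ and $\FF(w,Q') < t$.  Theorem~\ref{thm:series} then yields $\FF(w,Q) \le stw^2 + (s+t)w$.

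A short calculation, using $s \le 2\cdot 2^{c_1(\lg w)^2}$ and $t\le 2\cdot 2^{(c'+6(k-1))(\lg w)^2}$, bounds the right-hand side by $8w^2\cdot 2^{(c+6(k-1))(\lg w)^2}$.  To close the induction one needs $8w^2 \le 2^{6(\lg w)^2}$, i.e., $3 + 2\lg w \le 6(\lg w)^2$, which holds for all $w\ge 2$.  The case $w=1$ is handled separately: every poset of width $1$ is a chain, so $\FF(1,Q)=1 = 2^{(c+6k)\cdot 0}$.

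The key conceptual point is that the bound $2^{c(\lg w)^2}$ is preserved under series composition precisely because the exponent is quadratic in $\lg w$: the $w^2$ factor introduced by Theorem~\ref{thm:series} contributes only $2\lg w$ to the exponent, which can be absorbed by increasing the leading coefficient by a fixed additive constant per layer.  The only real obstacle is bookkeeping, namely verifying that $+6$ per layer of series composition is enough to swallow both this $w^2$ and the $O(1)$ slack from rounding the subexponential bounds on $\FF(w,Q_1)$ and $\FF(w,Q')$ up to the integer thresholds $s$ and $t$ required by Theorem~\ref{thm:series}.
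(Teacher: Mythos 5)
Your proposal is correct and follows essentially the same route as the paper: induct on $k$, peel off one block, feed the inductive bound into Theorem~\ref{thm:series} via integer thresholds $s,t$ just above the subexponential bounds, and absorb the resulting $O(w^2)$ factor and constant slack into the extra $6(\lg w)^2$ in the exponent (using $w\ge 2$, with $w=1$ handled trivially). The only differences — splitting off $Q_1$ rather than $Q_k$, and bookkeeping $3+2\lg w$ versus the paper's $4+2\lg w$ — are immaterial.
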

\begin{proof}
By induction on $k$.  For $k=1$, the claim is clear.  Suppose $k\ge 2$.   Since $\FF(1,Q)\le 1$, we may assume $w\ge 2$.  Let $R=Q_1\seriescomp \cdots \seriescomp Q_{k-1}$. By induction, $\FF(w,R) \le 2^{(c'+6(k-1))(\lg w)^2}$, where $c'= \sum_{i=1}^{k-1} c_i$.  By Theorem~\ref{thm:series} with $s\le 1+2^{(c'+6(k-1))(\lg w)^2}$ and $t \le 1+2^{c_k(\lg w)^2}$, we have $\FF(w,Q) \le stw^2 + (s+t)w \le 3stw^2 < 2^2 \cdot 2^{(c'+6(k-1))(\lg w)^2 + 1} \cdot 2^{c_k(\lg w)^2 + 1} \cdot 2^{2\lg w}$.  It follows that $\lg [\FF(w,Q)] < (c' + c_k + 6(k-1))(\lg w)^2 + 4 + 2\lg w \le (c + 6k)(\lg w)^2$. 
\end{proof}

The following key result due to Kierstead and Smith~\cite{kierstead} shows that First-Fit uses a subexponential number of chains on ladder-free posets.  We follow with the characterization of posets $Q$ for which $\FF(w,Q)$ is subexponential.  

\begin{theorem}[Kierstead--Smith~\cite{kierstead}]\label{thm:ladder}
For some constant $\gamma$, we have $\FF(w,L_n) \leq w^{\gamma(\lg(w) + \lg(n))}$. 

\end{theorem}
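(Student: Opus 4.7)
My approach is induction on the width $w$, setting $f(w,n) = \FF(w, L_n)$ and establishing a recurrence that solves to $w^{\gamma(\lg w + \lg n)}$. For the base case $w=1$, a width-$1$ poset is a chain and admits only a trivial wall of size $1$, so $f(1,n) = 1$. For $w=2$, I would give a direct pigeonhole argument on comparability patterns between consecutive chains of the wall, which should yield the clean bound $f(2,n) \le 2n$ that Kierstead and Smith obtain, and this is consistent with the general bound for any $\gamma \ge 1$.

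For the inductive step with $w \ge 3$, consider an $L_n$-free poset $P$ of width $w$ with a wall $\C = (C_1, \ldots, C_t)$. I would extract structure as follows: choose a canonical element $c_i \in C_i$ for each $i$ (say the minimum), and apply a Dilworth partition to $\{c_1, \ldots, c_t\}$ to split the chains of $\C$ into at most $w$ ``columns,'' each consisting of chains whose canonical elements lie on a common chain in $P$. By pigeonhole, some column contains at least $t/w$ chains, indexed $i_1 < \cdots < i_k$ with $c_{i_1} < \cdots < c_{i_k}$ forming an ascending $\C$-chain of length $k \ge t/w$.

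For each pair $j_1 < j_2$, the wall property produces a witness $y_{j_1, j_2} \in C_{i_{j_1}}$ that is incomparable to $c_{i_{j_2}}$. These witnesses, together with the ascending chain $c_{i_1} < \cdots < c_{i_k}$, are the skeleton of a potential $L_n$. The central combinatorial claim would be: if $k$ is large enough, either the witness structure directly embeds $L_n$ (contradicting $L_n$-freeness), or the witnesses exhibit enough ``collapse'' that we can isolate a subposet of width at most $w-1$ admitting a wall of size exceeding $f(w-1, n')$ for some modestly enlarged $n'$, contradicting the inductive hypothesis. This should give a recurrence of the form $f(w, n) \le w \cdot n \cdot f(w-1, \mathrm{poly}(w, n))$.

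The main obstacle is twofold. First, making the dichotomy above precise: I would expect to need an auxiliary Ramsey-style refinement, perhaps by iterating the column-extraction on the witness structure itself or by tracking multiple canonical elements per chain, in order to find the coherent pattern that either builds $L_n$ or certifies a width reduction. Second, and more delicately, the specific bound $w^{\gamma(\lg w + \lg n)} = 2^{\gamma(\lg w)^2} \cdot n^{\gamma \lg w}$ requires that the recursion on $w$ be ``efficient'': since there is no tower or exponential-in-$w$ factor, the recursion must effectively compound through $O(\log w)$ rather than $w$ levels. Achieving this likely requires balancing parameters so that the increment in $n$ at each step of the recursion on $w$ remains polynomial, which is where the careful choice of canonical elements and the witness-extraction step must earn their keep.
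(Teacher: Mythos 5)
This theorem is cited from Kierstead and Smith and is not proved in the paper, so there is no argument here for me to compare your sketch against; I can only evaluate it on its own terms. The recurrence you propose, $f(w,n) \le w\cdot n\cdot f(w-1,\mathrm{poly}(w,n))$, cannot reach the stated bound no matter how the $n$-parameter is managed. Iterating the bare factor of $w$ over $w$ levels already contributes $w!$, so $\lg f(w,n)$ grows at least on the order of $w\lg w$; the target, by contrast, has $\lg\bigl[w^{\gamma(\lg w + \lg n)}\bigr] = \gamma(\lg w)^2 + \gamma(\lg w)(\lg n)$, which for fixed $n$ is a $(\lg w)^2$ quantity, far smaller than $w\lg w$. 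You correctly diagnose that the recursion must compound through only $O(\lg w)$ levels, but the machinery you describe --- Dilworth columns, pigeonhole, and a width-$(w-1)$ reduction --- is intrinsically a one-unit-per-level scheme and offers no mechanism for halving $w$. The suggested fix (keeping the growth of $n$ polynomial at each level) does not help: if the width must step down by one, you are locked into $w$ levels, and the $w!$ overhead alone is fatal even if $n$ never grows at all; if $n$ is replaced by any fixed polynomial of itself each level, it becomes doubly exponential in $w$, which is worse still.

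Beyond the mismatch in recurrence shape, the step that would do all the work is stated only as a dichotomy (either the witnesses $y_{j_1,j_2}$ together with the ascending chain embed $L_n$, or some ``collapse'' yields a subposet of width $w-1$ carrying an overly large wall) without any indication of why failure to embed $L_n$ would force a width drop, nor what structure the witnesses must exhibit. That claim carries the entire argument and is left as a placeholder. Taken together, this reads as a research plan rather than a proof, and its load-bearing recurrence is of the wrong type to produce a quasi-polynomial bound. A workable route would need a genuine divide-and-conquer on the width --- something that halves $w$, or otherwise caps the recursion depth at $O(\lg w)$ --- which is a fundamentally different decomposition from the one you outline.
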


\begin{theorem}[Dichotomy Theorem]\label{thm:dichotomy}
Let $Q$ be an $n$-element poset of width $2$.  If $Q \in \QQ$, then there exists a constant $C$ (depending only on $Q$) such that $\FF(w,Q)\le 2^{C(\lg w)^2}$; in fact, $C=O(n)$ suffices.  If $Q \notin \QQ$, then $\FF(w,Q) \geq 2^w - 1$.
\end{theorem}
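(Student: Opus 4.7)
The plan is to treat the two implications of the dichotomy separately, deriving each from results already established in the excerpt. For the \emph{lower bound} ($Q \notin \QQ$): since $Q$ has width $2$, the contrapositive of Lemma~\ref{lem:reservoir} gives that no reservoir contains $Q$, so in particular $R_w$ is $Q$-free. By Theorem~\ref{thm:reservoir_bound}, $R_w$ has width $w$ and admits a wall of size $2^w - 1$, and any such wall certifies $\FF(w,Q) \ge 2^w - 1$. This direction is essentially immediate.

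For the \emph{upper bound} ($Q \in \QQ$): because $\QQ$ is by definition generated by the ladder-like posets under series composition, a straightforward structural induction lets us decompose $Q = Q_1 \seriescomp \cdots \seriescomp Q_k$ with each $Q_i$ ladder-like. Set $n_i = |Q_i|$, so $\sum_i n_i = n$ and $k \le n$. By Lemma~\ref{lem:ladder-like}, $Q_i$ is a subposet of $L_{n_i}$, so Theorem~\ref{thm:ladder} gives $\FF(w,Q_i) \le \FF(w,L_{n_i}) \le w^{\gamma(\lg w + \lg n_i)}$. Taking logs and using $\lg w \ge 1$ for $w \ge 2$, this rewrites as $\FF(w,Q_i) \le 2^{c_i(\lg w)^2}$ with $c_i = \gamma(1 + \lg n_i)$. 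Feeding these bounds into Corollary~\ref{cor:series} yields $\FF(w,Q) \le 2^{C(\lg w)^2}$ with $C = 6k + \sum_i c_i$. Using $\lg n_i \le n_i$ and $k \le n$, we estimate $\sum_i c_i \le \gamma(k + n) \le 2\gamma n$, so $C \le (2\gamma + 6)n = O(n)$, as required. The trivial case $w = 1$ gives $\FF(1,Q) = 1$, which fits any $C \ge 0$.

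The main obstacle, modest as it is, lies in the final bookkeeping: translating the Kierstead--Smith bound, which is naturally written as $w^{\gamma(\lg w + \lg n)}$, into the exponent shape $2^{c(\lg w)^2}$ demanded by Corollary~\ref{cor:series}, and then ensuring that the resulting constants accumulate only linearly in $n$ rather than in something like $n \lg n$. The decisive estimate is $\sum_i \lg n_i \le \sum_i n_i = n$, which is precisely what keeps the final constant $C = O(n)$. Everything else is a direct invocation of prior results from the paper.
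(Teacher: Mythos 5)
Your proof is correct and follows essentially the same route as the paper: reservoirs plus Lemma~\ref{lem:reservoir} for the lower bound, and the decomposition of $Q\in\QQ$ into ladder-like blocks, Lemma~\ref{lem:ladder-like} with Theorem~\ref{thm:ladder}, and Corollary~\ref{cor:series} for the upper bound. The only difference is cosmetic: you close the bookkeeping with the crude estimate $\sum_i \lg n_i \le \sum_i n_i = n$, whereas the paper uses convexity to get $\sum_i \lg n_i \le k\lg(n/k) \le (n/e)\lg e$; both give $C=O(n)$.
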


\begin{proof}
Suppose $Q\not\in\QQ$.  By Theorem~\ref{thm:reservoir_bound} and Lemma~\ref{lem:reservoir}, we have $\FF(w,Q)\ge 2^w-1$.  Suppose that $Q\in\QQ$.  Since $\FF(1,Q)\le 1$, we may assume $w\ge 2$.  Since $Q\in\QQ$, it follows that $Q=Q_1\seriescomp \cdots \seriescomp Q_k$ for some ladder-like posets $Q_1,\ldots,Q_k$.  For $1\le i\le k$, let $n_i = |Q_i|$.  Since $Q_i$ is ladder-like, Theorem~\ref{thm:ladder} implies that $\FF(w,Q_i)\le 2^{c_i(\lg w)^2}$ where $c_i=\gamma(1+\frac{\lg(n_i)}{\lg(w)}) \le \gamma(1+\lg n_i)$.  By Corollary~\ref{cor:series}, it follows that $\FF(w,Q)\le 2^{(c+6k)(\lg w)^2}$, where $c=\sum_{i=1}^k c_i$. Hence, it suffices to take $C=6k + c = 6k + \sum_{i=1}^k c_i \le (6+\gamma)k + \gamma\sum_{i=1}^k \lg n_i$.  Since $\sum_{i=1}^k n_i = n$, it follows by convexity that $\sum_{i=1}^k \lg n_i \le k\lg(n/k) \le (n/e)\lg e$, where $e$ is the base of the natural logarithm.  Using $k\le n$, we conclude $C\le (6+\gamma)n + \gamma(n/e)\lg e = O(n)$.
\end{proof}

Theorem~\ref{thm:dichotomy} provides a large separation in the behavior of First-Fit on $Q$-free posets according to whether or not $Q\in\QQ$.  It may be that even stronger results are possible.  Theorem~\ref{thm:series} shows that if $\FF(w,Q_1)$ and $\FF(w,Q_2)$ are polynomial in $w$, then so is $\FF(w,Q_1\seriescomp Q_2)$.  For large $n$, the best known lower bound on $\FF(w,L_n)$ is $w^{\lg(n - 1)}/(n - 1)$, due to Bosek, Kierstead, Krawczyk, Matecki, and Smith~\cite{subbound}.  This leaves open the possibility that $\FF(w,L_n)$ is polynomial in $w$ for each fixed $n$.  If so, then the separation provided by the Dichotomy Theorem would improve, yielding that $\FF(w,Q)$ is polynomial when $Q\in\QQ$ and exponential when $Q\not\in\QQ$.  

\begin{question}\label{q:polyladder}
Is it true for each fixed $n$ that $\FF(w,L_n)$ is bounded by a polynomial in $w$?
\end{question}

It is clear that $\FF(w,L_1) = w$ and Kierstead and Smith~\cite{kierstead} proved that $\FF(w,L_2) = w^2$.  Note that $L_3=Q_1\seriescomp Q_2\seriescomp Q_3$ where $Q_1$ and $Q_3$ are $1$-element posets and $Q_2$ is the $N$ poset.  Since $\FF(w,Q_1) = \FF(w,Q_3) = 0$ and $\FF(w,Q_2) = w$, it follows from Theorem~\ref{thm:series} that $\FF(w,L_3)$ is polynomial in $w$.  A more careful analysis, along the lines of Kierstead and Smith's proof of $\FF(w,L_2)=w^2$, shows that $\FF(w,L_3)\le w^2(w+1)$.  Question~\ref{q:polyladder} is open for $n\ge 4$.

It would also be interesting to better understand the behavior of First-Fit on $Q$-free posets when $Q\not\in\QQ$.  The smallest poset of width $2$ that is not in $\QQ$ is the \emph{skewed butterfly}, denoted $\hat B$, which consists of the chains $x_1<x_2<x_3$ and $y_1 < y_2$ with relations $x_1 < y_2$ and $y_1 < x_3$.  What is $\FF(w,\hat B)$?   

\section{First-Fit on Butterfly-Free Posets}

The \emph{butterfly poset}, denoted $B$, is $Q\seriescomp Q$, where $Q$ is the $2$-element antichain.  In this section, we obtain the asymptotics of $\FF(w,B)$.  The performance of First-Fit on butterfly-free posets is strongly related to the bipartite Tur\'an number for $C_4$.  K\"ovari, S\'os, Tur\'an~\cite{4cycle} showed that the maximum number of edges in a subgraph of $K_{n,n}$ that excludes $C_4$ is $(1 + o(1))n^{3/2}$.   

\begin{lemma}[K\"ovari--S\'os--Tur\'an~\cite{4cycle}]\label{lem:projplane}
Let $q$ be a prime power, and let $n=q^2+q+1$.  There exists a $(q+1)$-regular spanning subgraph of $K_{n,n}$ that has no $4$-cycle.
\end{lemma}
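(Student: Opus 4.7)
The plan is to exhibit the incidence bipartite graph of a finite projective plane of order $q$. Recall that for every prime power $q$, there is a projective plane $\Pi_q$ of order $q$: a set of $n = q^2+q+1$ points and $n$ lines such that every line contains $q+1$ points, every point lies on $q+1$ lines, and any two distinct points determine a unique line (equivalently, any two distinct lines meet in exactly one point). One standard construction takes the $1$-dimensional and $2$-dimensional subspaces of $\mathbb{F}_q^3$ as points and lines, respectively, with incidence given by containment. I would either cite this construction or sketch it in a single sentence.

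Given $\Pi_q$, I would form the bipartite graph $G$ whose two parts $P$ and $L$ are the point set and line set of $\Pi_q$ (each of size $n$), with $p \in P$ adjacent to $\ell \in L$ iff $p$ lies on $\ell$. Then $G$ is a spanning subgraph of $K_{n,n}$, and regularity on both sides gives that $G$ is $(q+1)$-regular directly from the incidence axioms.

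The remaining step is to check $C_4$-freeness. A $4$-cycle in $G$ must alternate between $P$ and $L$, so it has the form $p_1, \ell_1, p_2, \ell_2, p_1$ with $p_1 \ne p_2$ and $\ell_1 \ne \ell_2$. Then both $p_1$ and $p_2$ lie on both $\ell_1$ and $\ell_2$, contradicting the axiom that two distinct lines share at most one point. Hence $G$ is $C_4$-free.

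The main obstacle is essentially zero: this is a classical construction and the only substantive input is the existence of $\Pi_q$ for prime powers $q$. If I wanted the writeup to be fully self-contained, the only real work would be presenting the $\mathbb{F}_q^3$ model and verifying the incidence axioms (every two $1$-dimensional subspaces span a unique $2$-dimensional subspace, and every two $2$-dimensional subspaces of $\mathbb{F}_q^3$ intersect in a unique $1$-dimensional subspace); otherwise the argument is a three-line application of the axioms.
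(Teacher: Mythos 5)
Your proof is correct: the incidence graph of a projective plane of order $q$ is precisely the classical construction behind the cited K\"ovari--S\'os--Tur\'an result, and your verification of $(q+1)$-regularity and $C_4$-freeness from the incidence axioms is complete. The paper itself gives no proof (it simply cites the result), so your argument supplies exactly the standard construction the citation refers to.
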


We also need a standard result about the density of primes.

\begin{theorem}[Hoheisel~\cite{Hoheisel}]\label{thm:prime-density}
There exists a real number $\theta$ with $\theta < 1$ such that for all sufficiently large real numbers $x$, there is a prime in the interval $[x-x^\theta,x]$.
\end{theorem}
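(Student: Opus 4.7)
The plan is to combine the Riemann--von Mangoldt explicit formula for the Chebyshev function with a zero-free region and a zero-density estimate for the Riemann zeta function. Write $\psi(x) = \sum_{p^k \le x} \log p$ and recall the explicit formula
$$\psi(x) = x - \sum_{\rho} \frac{x^\rho}{\rho} + O(\log^2 x),$$
where the sum is over the nontrivial zeros $\rho = \beta + i\gamma$ of $\zeta(s)$. Since $\psi$ increases only at prime powers, and the contribution of proper prime powers to $\psi(x) - \psi(x - x^\theta)$ is only $O(\sqrt{x}\,\log x)$, it suffices to produce some fixed $\theta < 1$ for which $\psi(x) - \psi(x - x^\theta) > 0$ for all sufficiently large $x$; this will force a prime to lie in $[x - x^\theta, x]$.

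First I would establish the classical Hadamard--de la Vall\'ee Poussin zero-free region of the form $\beta < 1 - c/\log(|\gamma| + 2)$, using the nonnegativity of $3 + 4\cos\phi + \cos 2\phi$ together with standard estimates for $\log\zeta$ on and near the line $\sigma = 1$. Next I would establish a zero-density estimate of the shape $N(\sigma, T) \ll T^{A(1-\sigma)}(\log T)^B$, where $N(\sigma, T)$ counts zeros with $\beta \ge \sigma$ and $|\gamma| \le T$; any bound of Ingham type is enough for our purposes, and the proof uses an approximate functional equation for $\zeta(s)$ combined with mean-value estimates in the critical strip.

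With these two inputs in hand, I would split the zero sum in $\psi(x) - \psi(x - x^\theta)$ according to the real part $\beta$. Zeros with $\beta \le 1/2 + \delta$ contribute at most $O(x^{1/2 + \delta} \log^2 x)$, which is $o(x^\theta)$ as soon as $\theta > 1/2 + \delta$. For zeros with larger $\beta$, partial summation against $N(\sigma, T)$ converts the sum $\sum_\rho x^\beta/|\rho|$ into an integral that is controlled by the density exponent; the zero-free region truncates $\beta$ away from $1$, and balancing the density exponent against this truncation yields a fixed $\theta < 1$ for which the total error is $o(x^\theta)$.

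The main obstacle is the zero-density estimate: Hoheisel's original proof squeezed out only $\theta = 1 - 1/33000$, and the long history of refinements (Ingham, Huxley, Heath-Brown, and Baker--Harman--Pintz, who reach $\theta = 0.525$) shows how delicate this ingredient is. For the combinatorial purpose here, any $\theta < 1$ at all is sufficient: the theorem will be invoked to pick a prime power $q$ close to a target value so that the projective-plane construction of Lemma~\ref{lem:projplane} applies, and no quantitative strength beyond $\theta < 1$ is exploited.
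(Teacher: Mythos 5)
This statement is imported by the paper as a black box: the authors give no proof and simply cite Hoheisel (and the later refinements of Baker--Harman--Pintz), so the only ``proof'' the paper supplies is the citation, and your closing observation that the application only needs some fixed $\theta<1$ is exactly the spirit in which the paper uses it. Your sketch of how one would actually prove it follows the correct classical template (explicit formula for $\psi$, a zero-free region, a zero-density estimate, then splitting the zero sum), but as written it contains a genuine quantitative gap.

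The gap is in the choice of zero-free region. With only the Hadamard--de la Vall\'ee Poussin region $\beta < 1 - c/\log(|\gamma|+2)$, the final balancing step fails. Concretely, after truncating the explicit formula at height $T=x^{\lambda}$ and inserting an Ingham-type bound $N(\sigma,T)\ll T^{A(1-\sigma)}(\log T)^{B}$, the zeros with $\beta$ near the edge of the zero-free region contribute on the order of
\[
x^{\theta}\exp\bigl(-c\,(1-A\lambda)\tfrac{\log x}{\log T}\bigr)(\log T)^{B}
=x^{\theta}\exp\bigl(-c\,(1-A\lambda)/\lambda\bigr)(\log x)^{B},
\]
because the width $c/\log T$ of the classical region, multiplied by $\log x$ with $T$ a fixed power of $x$, yields only a \emph{constant} saving in the exponential; this constant cannot absorb the $(\log T)^{B}$ losses from the density estimate, so the error is not $o(x^{\theta})$. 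This is precisely the historical subtlety: Hoheisel had to use Littlewood's wider region $\beta < 1 - c\log\log|\gamma|/\log|\gamma|$ (and modern treatments use Vinogradov--Korobov), i.e.\ any region whose width times $\log t$ tends to infinity fast enough to beat the logarithmic factors; with such a region plus $N(\sigma,T)\ll T^{A(1-\sigma)}(\log T)^{B}$ one indeed gets every fixed $\theta>1-1/A$. Secondary, more minor points: the explicit formula should be used in its truncated form (error $O(x\log^{2}x/T)$) rather than with an $O(\log^{2}x)$ error, and the two ``inputs'' you propose to establish are themselves substantial theorems, so the writeup is a plan rather than a proof; but the decisive issue is that the classical zero-free region you name is not strong enough for the argument you describe.
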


Since the result of Hoheisel~\cite{Hoheisel}, many research groups have improved the bound on $\theta$; see Baker and Harman~\cite{BH-history} for the history.  The current best bound is $\theta= 0.525$, due to Baker, Harman, and Pintz~\cite{BHP}.

\begin{theorem}
$\FF(w,B) \geq (1 - o(1))w^{3/2}$.
\end{theorem}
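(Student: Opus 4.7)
The plan is to construct, for each sufficiently large $w$, a butterfly-free poset of width $w$ admitting a wall of size $(1-o(1))w^{3/2}$; since the worst-case First-Fit output on a poset equals its maximum wall size, this gives the claimed bound. First I would apply Theorem~\ref{thm:prime-density} to choose a prime $q$ with $q = (1-o(1))\sqrt{w}$ and $n := q^2+q+1 \le w$, so that $n = (1-o(1))w$. By Lemma~\ref{lem:projplane}, fix a $(q+1)$-regular $C_4$-free bipartite graph $G$ on parts $L$ and $R$ of size $n$ each.

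Next I would build the poset $P_G$ by blowing up each vertex $v \in L \cup R$ of $G$ into a chain $v^{(1)} < v^{(2)} < \cdots < v^{(q+1)}$, and declaring $\ell^{(k)} < r^{(k')}$ for all $k,k' \in [q+1]$ whenever $\ell r \in E(G)$. A short case analysis on the types of the four elements of a would-be butterfly shows that the only viable configuration places both lower elements in distinct $L$-chains and both upper elements in distinct $R$-chains; this yields four edges of $G$ on the four involved vertices, i.e.\ a $C_4$, contradicting the choice of $G$, so $P_G$ is butterfly-free. The antichain $\{\ell^{(1)} : \ell \in L\}$ shows width at least $n$; a perfect matching $M$ of $G$ (which exists by K\"onig's theorem since $G$ is regular bipartite) gives a Dilworth partition of $P_G$ into the $n$ chains $\ell^{(1)} < \cdots < \ell^{(q+1)} < M(\ell)^{(1)} < \cdots < M(\ell)^{(q+1)}$, so the width is exactly $n$.

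To exhibit the wall I would present the elements of $P_G$ to First-Fit in reverse-level order: for $k = q+1, q, \ldots, 1$, first present every $\ell^{(k)}$ and then every $r^{(k)}$. When an $\ell^{(k)}$ is processed, every existing chain was opened in some earlier round at level $k' > k$; its current top is either some $\ell'^{(k')}$ (lying above $\ell^{(k)}$ when $\ell' = \ell$ and incomparable when $\ell' \ne \ell$) or some $r^{(k')}$ (lying above $\ell^{(k)}$ when $\ell r \in E(G)$ and incomparable otherwise). In every case the top fails to lie strictly below $\ell^{(k)}$, so First-Fit must open a new chain, producing $n$ new chains per level and at least $(q+1)n$ chains in total. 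Finally I would boost the width to exactly $w$ by taking the disjoint union of $P_G$ with an antichain of $w-n$ elements: butterflies cannot cross incomparable components, so the resulting poset is butterfly-free of width $w$, and First-Fit opens one additional chain per antichain element, giving at least $(q+1)n + (w-n)$ chains. Hoheisel's quantitative bound yields $(q+1)n + (w-n) = (1-o(1))w^{3/2}$, completing the proof. The main obstacle is the First-Fit bookkeeping in the reverse-level argument, specifically verifying that no $\ell^{(k)}$ can slip into a chain opened at a higher level regardless of how the $r^{(k')}$-elements cap chains at earlier rounds.
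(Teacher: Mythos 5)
Your construction does not work: the poset $P_G$ you build has maximum wall size $O(w)$, not $(1-o(1))w^{3/2}$, so no presentation order can force First-Fit to use $(q+1)n$ chains on it. The relations $\ell^{(k)} < r^{(k')}$ for \emph{all} $k,k'$ (whenever $\ell r \in E(G)$) are too dense. Concretely, any chain $C$ in $P_G$ has all its $L$-side elements from a single $\ell$-chain and all its $R$-side elements from a single $r$-chain; and if $C$ meets both, then $\ell r \in E(G)$, so \emph{every} element of $C$ is comparable to \emph{every} $\ell^{(m)}$. Consequently, if two chains $C_i$ and $C_j$ in a wall both contain elements of the $\ell$-chain, then whichever comes second has an element $\ell^{(m)}$ with no incomparable witness in the earlier one, violating the wall condition. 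The same holds on the $R$-side. Hence in any wall of $P_G$, each of the $2n$ vertex-chains contributes elements to at most one wall chain, which caps the wall size at $2n = O(w)$.

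The local step in your First-Fit bookkeeping reflects the same confusion and is also incorrect as stated: you conclude that $\ell^{(k)}$ opens a new chain because ``the top fails to lie strictly below $\ell^{(k)}$,'' but First-Fit will happily insert $\ell^{(k)}$ as the new \emph{minimum} of an existing chain whenever every element of that chain is comparable to $\ell^{(k)}$. For example, after round $q+1$ a chain may be $\{\ell^{(q+1)}, r^{(q+1)}\}$ with $\ell r\in E(G)$; then $\ell^{(q)}$ lies below both and is simply appended to that chain. The condition you need is incomparability to \emph{some} element of every existing chain, not ``top is not strictly below.''

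The paper's construction also starts from a $C_4$-free $(q+1)$-regular bipartite graph, but it avoids the density problem by building the poset on vertex--edge \emph{incidence} pairs rather than on vertex blow-ups. There, $(x,e_1)$ lies below $(y,e_2)$ with $e_1 \neq e_2$ only when the covering edge $xy$ occurs no earlier than both $e_1$ and $e_2$ in a fixed ordering of $E(G')$; this keeps the relations sparse enough that the $|E(G)|$ two-element chains $(x,e) < (y,e)$, in edge order, together with singletons from a perfect matching, genuinely form a wall. If you want to repair your argument, you would need a comparably sparse relation between the two sides --- making all of $\ell^{(k)} < r^{(k')}$ hold for every pair of levels is exactly what kills the wall.
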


\begin{proof}
\comment{
Let $q = (-1+\sqrt{4w-3})/2$, so that $w=q^2+q+1$.  We may assume that $w$ is sufficiently large so that there is a prime $p$ in $[q - q^\theta, q]$ by Theorem~\ref{thm:prime-density}.  Let $w'=p^2+p+1$, and note that $w=q^2 + q + 1 \ge p^2 + p + 1 = w'$ and $w' = p^2+p+1 \ge (1-q^\theta/q)^2q^2 + (1-q^\theta/q)q + 1 = (1-o(1))(q^2+q+1) = (1-o(1))w$.  Hence, it suffices to show that $\FF(w',B) \ge (1-o(1))(w')^{3/2}$ since then $\FF(w,B)\ge\FF(w',B) \ge (1-o(1))(w')^{3/2} \ge (1-o(1))w^{3/2}$.}
By Theorem~\ref{thm:prime-density} and standard asymptotic arguments, we may assume that $w$ has the form $q^2+q+1$, where $q$ is prime.  By Lemma~\ref{lem:projplane}, there exists a $(q+1)$-regular $(X,Y)$-bigraph $G$ with parts of size $w$ that has no $4$-cycle.  Since $G$ is a regular bipartite graph, it follows from Hall's Theorem that $G$ has a perfect matching $M$.  Let $G'=G-M$, and let $L$ be an ordering of $E(G')$.

Using $G'$, we construct a $B$-free poset $P$ of width $w$ and a wall of $P$ size $|E(G)|$.  It will then follow that $\FF(w,B)\ge |E(G)| = (q+1)w = (1-o(1))w^{3/2}$.  Let $I_X$ be the set of all pairs $(x,e)$ such that $x\in X$, $e\in E(G')$, and $e$ is incident to $x$.  Similarly, let $I_Y$ be the set of all pairs $(y,e)$ such that $y\in Y$, $e\in E(G')$ and $e$ is incident to $y$.  We construct $P$ so that $M$ is a maximum antichain, $\belowset(M)=I_X$, and $\aboveset(M)=I_Y$.  The subposet induced by $I_X \cup M$ consists of $w$ incomparable chains, indexed by $M$.  For $x_iy_i\in M$ with $x_i\in X$ and $y_i\in Y$, the chain associated with $x_iy_i$ consists of all pairs $(x_i,e)\in I_X$ in order according to $L$ followed by top element $x_iy_i$.  The subposet induced by $M \cup I_Y$ also consists of $w$ incomparable chains, indexed by $M$.  For $x_iy_i\in M$ with $x_i\in X$ and $y_i\in Y$, the chain associated with $x_iy_i$ in the subposet induced by $M \cup I_y$ consists of bottom element $x_iy_i$ followed by all pairs $(y_i,e)\in I_Y$ in reverse order according to $L$.  Note that if $e$ is the first edge in $L$ and $e=xy$, then $(x,e)$ is minimal in $P$ and $(y,e)$ is maximal.  The chains in $I_X \cup M$ and the chains in $M \cup I_Y$ combine to form a Dilworth partition of $P$ of size $w$; let $D_i$ be the Dilworth chain containing $x_iy_i$.  It remains to describe the relations between points in $I_X$ and points in $I_Y$.  For $(x,e_1)\in I_X$ and $(y,e_2)\in I_Y$, we have that $(x,e_1)$ is covered by $(y,e_2)$ if and only if $e_1=e_2=xy\in E(G')$.

We claim that $P$ is $B$-free.  For each element $z\in I_X\cup M$, we have that $\belowset(z)$ is a chain.  Hence, a maximal element in a copy of $B$ must belong to $I_Y$.  Similarly, since $\aboveset(z)$ is a chain when $z\in M\cup I_Y$, a minimal element in a copy of $B$ must belong to $I_X$.  In a chain of cover relations from $(x,e_1)\in I_X$ up to $(y,e_2)\in I_Y$, either all points stay in the same Dilworth chain $D_i$, implying that $xy=x_iy_i\in M$, or there is a cover relation from a point in $D_i$ to a point in $D_j$, that implying $xy=x_iy_j$ with $x_iy_j\in E(G')$.  In both cases, $(x,e_1)\le (y,e_2)$ implies that $xy\in E(G)$, and it follows that a copy of $B$ in $P$ corresponds to a $4$-cycle in $G$, a contradiction.

It remains to construct a wall $W$ of $P$ of size $|E(G)|$.  The wall contains $|E(G')|$ chains of size $2$ arranged in order according to $L$, followed by $w$ singleton chains.  For $e\in L$ with $e=xy$, the corresponding chain in the wall is $(x,e)<(y,e)$.  These chains are followed by $w$ singleton chains, each consisting of a point in $M$.  Let $C_i$ and $C_j$ be chains in $W$ with $i<j$, and let $z\in C_j$.  We show that $z$ is incomparable to some point in $C_i$.  Since $M$ is an antichain, we may assume that $C_i$ is a chain of the form $(x,e)<(y,e)$.  If $C_j$ is a singleton chain containing only $z$, then $z$ is incomparable to every element in $P$ outside its Dilworth chain.  Since $(x,e)$ and $(y,e)$ are in distinct Dilworth chains, it follows that $C_i$ contains a point incomparable to $z$.  Otherwise, $C_j$ has the form $(x',e') < (y',e')$, and since $i<j$, it follows that $e$ precedes $e'$ in $L$.  Suppose that $z=(x',e')$.  If $(x',e') \incomparable (x,e)$, then $(x,e)$ is the desired point in $C_i$.  Otherwise, $(x',e')$ is comparable to $(x,e)$, implying that $(x,e)$ and $(x',e')$ are in the same Dilworth chain and $x=x'$.  Since $e$ precedes $e'$ in $L$, we have $(x,e)<(x',e')$.  If $(x',e')$ is also comparable to $(y,e)$, it must be that $(x',e')<(y,e)$.  But now $(x,e)<(x',e')<(y,e)$ contradicts that $(y,e)$ covers $(x,e)$ in $P$.  The case that $z=(y',e')$ is analogous.
\end{proof}

In a poset $P$ with a set of elements $S$, an \emph{extremal point} of $S$ is a minimal or maximal element in $S$.

\begin{lemma}\label{lem:walkdown}
Let $C$ and $D$ be chains in $P$.  If $\min C\incomparable \max D$ and $\max C\incomparable \min D$, then $C$ and $D$ are pairwise incomparable.   Consequently if $C'$ and $D'$ are chains and $(x_1,y_1),(x_2,y_2)\in C'\times D'$ are incomparable pairs, then $\min\{x_1,x_2\}\incomparable \min\{y_1,y_2\}$ and $\max\{x_1,x_2\} \incomparable \max\{y_1,y_2\}$.
\end{lemma}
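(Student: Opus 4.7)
The plan is to prove the main statement by a direct contradiction argument, and then to obtain the consequence by applying the main statement to two-element subchains of $C'$ and $D'$.

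For the main statement, I would suppose for contradiction that some $c \in C$ is comparable to some $d \in D$. If $c \le d$, then $\min C \le c \le d \le \max D$ gives $\min C \le \max D$, contradicting the first hypothesis. If $c \ge d$, then $\max C \ge c \ge d \ge \min D$ gives $\max C \ge \min D$, contradicting the second hypothesis. Either case yields a contradiction, so every $c \in C$ is incomparable to every $d \in D$, as required.

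For the consequence, I would assume without loss of generality that $x_1 \le x_2$ in the chain $C'$. If additionally $y_1 \le y_2$ in $D'$, then $\min\{x_1, x_2\} \incomparable \min\{y_1, y_2\}$ reduces to $x_1 \incomparable y_1$, which is a hypothesis, and similarly for the maxes; the conclusion is immediate. The substantive case is $y_2 \le y_1$. Here I would apply the main statement to the subchains $C = \{x_1, x_2\} \subseteq C'$ and $D = \{y_1, y_2\} \subseteq D'$. In this setup $\min C = x_1$, $\max C = x_2$, $\min D = y_2$, and $\max D = y_1$, so the hypotheses that $(x_1, y_1)$ and $(x_2, y_2)$ are incomparable pairs translate precisely into $\min C \incomparable \max D$ and $\max C \incomparable \min D$. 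The main statement then gives that every pair in $C \times D$ is incomparable; in particular $x_1 \incomparable y_2$ and $x_2 \incomparable y_1$, which are exactly $\min\{x_1, x_2\} \incomparable \min\{y_1, y_2\}$ and $\max\{x_1, x_2\} \incomparable \max\{y_1, y_2\}$.

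Given the elementary nature of the lemma, I do not anticipate a serious obstacle; the only real point to track is the case split on whether the labelings of $y_1, y_2$ agree with their order in $D'$, since only the ``crossed'' case genuinely invokes the main statement.
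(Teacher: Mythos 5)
Your proof is correct and follows essentially the same route as the paper: the first part via the observation that any comparability between $C$ and $D$ would force $\min C \le \max D$ or $\max C \ge \min D$, and the second part by splitting into the trivial (aligned) case and the crossed case, applying the first part to the two-element subchains with extremal points $\{x_1,x_2\}$ and $\{y_1,y_2\}$. The only difference is presentational---you phrase the first part as an explicit contradiction, while the paper states the contrapositive and leaves the inference to the reader.
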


\begin{proof}
If $u\le v$, $u\in C$, and $v\in D$, then $\min C\le u \le v \le \max D$.  If $u\le v$, $u\in D$, and $v\in C$, then $\min D\le u \le v\le \max C$.  For the second part, either the statement is trivial or we apply the first part to the subchains of $C'$ and $D'$ with extremal points $\{x_1,x_2\}$ and $\{y_1,y_2\}$ respectively.
\end{proof}

Starting with an arbitrary chain partition $\C$, iteratively moving elements to earlier chains produces a wall $W$ with $|W| \le |\C|$.  Beginning with a Dilworth partition, it follows that each poset $P$ of width $w$ has a \emph{Dilworth wall} consisting of $w$ chains.  If $R$ and $S$ are sets of points in $P$, we write $R<S$ if $u<v$ when $(u,v)\in R\times S$.
\begin{theorem}
$\FF(w,B) \leq (1 + o(1))w^{3/2}$. 
\end{theorem}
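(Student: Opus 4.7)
Let $P$ be a $B$-free poset of width $w$ with a wall $\C$. The plan is to associate each chain of $\C$ with an edge of a bipartite graph $H$ on $[w] \sqcup [w]$, show $H$ is $K_{2,2}$-free using $B$-freeness together with Lemma~\ref{lem:walkdown}, and apply the K\"ov\'ari--S\'os--Tur\'an bound to conclude $|\C| \le (1+o(1))w^{3/2}$.

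Fix a Dilworth wall $\D = (D_1, \ldots, D_w)$ of $P$ (available by the discussion preceding the theorem) and let $\varphi\colon P \to [w]$ be the associated Dilworth coloring. For each chain $C \in \C$ with $|C| \ge 2$, define the edge $e(C) = (\varphi(\min C), \varphi(\max C))$ in $H$. Singletons are handled separately: if $\{x\}$ and $\{x'\}$ are distinct singletons of $\C$ with $\varphi(x) = \varphi(x')$, then $x$ and $x'$ lie in a common Dilworth chain and are comparable, violating the wall property; hence there are at most $w$ singleton chains.

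The technical core is to show that $C \mapsto e(C)$ is injective on the chains of size at least $2$ and that $H$ is $K_{2,2}$-free as a simple bipartite graph.  Both claims are established by deriving a contradicting butterfly $\{u_1, u_2\} < \{v_1, v_2\}$ from a hypothetical failure.  One picks the earliest chain among the two colliding chains (or among the four chains realizing a $K_{2,2}$) in the $\C$-order and uses the wall property of $\C$ to extract incomparable witnesses in that chain for the extremes of the other chains. Lemma~\ref{lem:walkdown} then consolidates pairs of such witnesses; combined with the forced comparabilities in the shared Dilworth chains and the chain-order inequalities $\min C < \max C$ inside each chain of $\C$, this yields the cross-comparabilities that the butterfly requires.

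Granted these claims, the K\"ov\'ari--S\'os--Tur\'an inequality gives $|E(H)| \le \tfrac{1}{2}(1 + \sqrt{4w-3})w = (1+o(1))w^{3/2}$, and adding the at most $w$ singletons yields $|\C| \le (1+o(1))w^{3/2}$. The main obstacle is the case analysis: in certain sub-cases the wall property initially produces only ``parallel'' incomparabilities (e.g., witnesses for $\min C'$ and $\max C'$ both on the same side of a chosen interior element of $C$), and it takes additional bridging work using Lemma~\ref{lem:walkdown} together with the auxiliary chains of the $K_{2,2}$ configuration to synthesize the ``cross'' pattern $p \incomparable \max C'$ and $q \incomparable \min C'$ required to exhibit a butterfly.
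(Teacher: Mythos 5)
Your reduction to the K\"ov\'ari--S\'os--Tur\'an bound is the right target, but the two claims that carry all the weight --- injectivity of $C\mapsto(\varphi(\min C),\varphi(\max C))$ on the non-singleton chains of $\C$, and $C_4$-freeness of $H$ --- are only asserted, and you yourself flag the decisive case analysis (turning ``parallel'' incomparabilities into the ``cross'' pattern needed for a butterfly) as unresolved. Worse, the injectivity claim is false as stated. Take $P$ with chain $x_1<y_1<y_2<x_2$ and an extra point $u$ satisfying $x_1<u<x_2$ and $u\incomparable y_1,y_2$: this poset is $B$-free of width $2$, $\D=(\{x_1<y_1<y_2<x_2\},\{u\})$ is a Dilworth wall, and $\C=(C_1,C_2)$ with $C_1=\{x_1<u<x_2\}$, $C_2=\{y_1<y_2\}$ is a wall. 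Both chains have size at least $2$ and both get the signature $(1,1)$, so your edge map collides and the count $|\C|\le|E(H)|+w$ breaks. Such collisions come precisely from wall chains that contain an extremal point of a Dilworth chain or sit inside a single Dilworth chain (as both chains above do), and nothing in your setup removes them; their multiplicity is not obviously negligible, and your argument gives no handle on it.

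The ingredient you are missing is the structural step the paper does before any signature is assigned. Using $B$-freeness, split $P$ into $R=\{x: \aboveset(x)\text{ is a chain}\}$ and $R'=P-R$ (where below sets are chains), discard the at most $3w$ wall chains that contain an extremal point of a Dilworth chain or lie inside one Dilworth chain, and prove that for every remaining wall chain $C$ the upper part $C\cap R$ lies in a single Dilworth chain and the lower part $C\cap R'$ lies in a single (different) Dilworth chain. The signature is then the pair of these two Dilworth chains (which, for the surviving chains, agrees with your min/max pair, but only after this lemma is available). This confinement is exactly what makes both halves of your ``technical core'' go through: it forces the incomparability witnesses supplied by the wall property to land in the four Dilworth chains of a hypothetical $C_4$, so that Lemma~\ref{lem:walkdown} can consolidate them into two incomparable pairs $(x,y)\in D_\gamma\times D_\delta$ and $(x',y')\in D_\alpha\times D_\beta$ with $x,y<x',y'$, i.e.\ a butterfly. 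With only the min/max data you have no control over where the interior of a wall chain wanders, which is exactly why you get stuck with ``parallel'' incomparabilities; the bridging work you defer is not a routine case check but the substance of the proof, and as written the proposal does not establish the theorem.
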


\begin{proof}
Let $P$ be a $B$-free poset and let $\D$ be Dilworth wall of $P$ with $\D=(D_1, \ldots, D_w)$. Let $R$ be the set of points $x\in P$ such that $\aboveset(x)$ is a chain.  Let $R' = P-R$, and note that $\belowset(x)$ is a chain for each $x\in R'$ since $P$ is $B$-free.

Let $\C$ be a wall of $P$ with $\C=(C_1, \ldots, C_t)$; we bound $|\C|$.  Since $|\D|=w$, at most $2w$ chains in $\C$ contain an extremal point from a chain in $\D$.  Also, no two chains in $\C$ are contained in the same chain in $\D$, and so at most $w$ chains in $\C$ are contained in a chain in $\D$.  Let $\C'$ be the subwall of $\C$ consisting of all chains $C\in\C$ that do not contain an extremal point of a chain in $\D$ but contain points from at least two chains in $\D$.  We have that $|\C| \le |\C'| + 3w$.  We claim that for each chain $C_i\in\C'$, we have that $C_i\cap R$ is contained in a chain in $\D$.  Suppose that $C_i\cap R$ contains elements from at least two chains in $\D$.  Let $D_{\alpha}$ be the Dilworth chain containing $\max C_i$, let $x=\max(C_i - D_\alpha)$, and let $D_{\beta}$ be the Dilworth chain containing $x$.  Let $m=\max D_\beta$, and note that $C_i\in\C'$ implies $m\not\in C_i$.  It follows that $m\in C_j$ for some $C_j\in\C$ with $j\ne i$; since $\aboveset(x)$ is a chain and $m>x$, it follows that $m$ is comparable to every element in $C_i$ and therefore $j<i$.  Let $y$ be the element covering $x$ in $C_i$.  Note that $y\in D_\alpha$ and $y$ is comparable to everything in $D_{\beta}$ since $A(x)$ is a chain, and this implies $\alpha < \beta$.  Since $m,y\in \aboveset(x)$ and $\aboveset(x)$ is a chain, either $m<y$ or $m>y$.  If $m > y$, then $m$ is comparable to everything in $D_{\alpha}$, contradicting $m\in D_\beta$ and $\alpha < \beta$.  Similarly, if $m < y$, then $y$ is comparable to every element in $C_j$, contradicting $y\in C_i$ and $j < i$.  Therefore $C_i \cap R$ is contained in a single chain in $\D$.  By a symmetric argument, $C_i\cap R'$ is contained in a single chain in $\D$.

It remains to bound $|\C'|$.  Note that for each $C\in\C'$, we have that $C\cap R$ is contained in some Dilworth chain $D_\alpha\in \D$ and $C\cap R'$ is contained on some Dilworth chain $D_\gamma\in \D$, with $\alpha \ne \gamma$; we say that $(\alpha,\gamma)$ is the \emph{signature} of $C\in \C'$ if $C\cap R\subseteq D_\alpha$ and $C\cap R'\subseteq D_\gamma$.  Note that if $C_i,C_j\in\C'$ with $i<j$, then it is not possible for both $C_i$ and $C_j$ to have the same signature $(\alpha,\gamma)$, or else $C_i\cap R' < C_j < C_i\cap R$.  Let $X$ and $Y$ be disjoint copies of $\D$, and let $G$ be the $(X,Y)$-bigraph in which $D_\alpha \in X$ and $D_\gamma\in Y$ are adjacent if and only if some chain in $\C'$ has signature $(\alpha,\gamma)$.  We claim that $G$ has no $4$-cycle, implying $|\C'| = |E(G)| \le (1+o(1))w^{3/2}$.

Suppose for a contradiction that $G$ has a $4$-cycle on $D_\alpha,D_\beta\in X$ and $D_\gamma, D_\delta\in Y$.  Let $C_i,C_j,C_k,C_\ell$ be chains in $\C'$ with signatures $(\alpha,\gamma)$, $(\alpha,\delta)$, $(\beta,\gamma)$, and $(\beta,\delta)$, respectively.  Assume, without loss of generality, that $C_i$ precedes $C_j$ in $\C$, and let $y_1\in C_j\cap R'\subseteq D_\delta$.  Since $y_1$ is in a later chain, it must be that $x_1\incomparable y_1$ for some $x_1\in C_i$.  Since $C_j\cap R$ and $C_i\cap R$ are both contained in $D_\alpha$ and $y_1\in C_j \cap R' < C_j \cap R < C_i \cap R$, it follows that $x_1\in C_i \cap R'\subseteq D_\gamma$.  Therefore there is an incomparable pair $(x_1,y_1)\in (C_i \cap R') \times (C_j \cap R')$.  A similar argument applied to $C_k$ and $C_\ell$ with top parts in $D_\beta$ shows that there is an incomparable pair $(x_2,y_2)\in (C_k\cap R')\times (C_\ell \cap R')$.  Since $C_i\cap R',C_k\cap R'\subseteq D_\gamma$ and $C_j\cap R',C_\ell\cap R'\subseteq D_\delta$, it follows from Lemma~\ref{lem:walkdown} that there is an incomparable pair $(x,y) \in D_\gamma \times D_\delta$ with $x\le \min\{\max C_i \cap R', \max C_k\cap R'\}$ and $y\le \min\{\max C_j\cap R', \max C_\ell\cap R'\}$.  Similarly, there is an incomparable pair $(x',y')\in D_\alpha \times D_\beta$ with $x'\ge \max\{\min C_i\cap R, \min C_j\cap R\}$ and $y'\ge \max\{\min C_k \cap R, \min C_\ell \cap R\}$.  Since $x,y<x',y'$, it follows that $\{x,y,x',y'\}$ induces a copy of $B$ in $P$.

Since $|\C| \le |\C'| + 3w \le (1+o(1))w^{3/2}$, the bound on $FF(w,B)$ follows.
\end{proof}

\begin{corollary}\label{cor:butterfly}
$\FF(w,B) = (1 + o(1))w^{3/2}$.
\end{corollary}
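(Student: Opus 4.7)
The plan is straightforward: Corollary~\ref{cor:butterfly} follows immediately by combining the two matching asymptotic bounds established in the two theorems that precede it. The first theorem constructs, for each sufficiently large $w$ of the form $q^2+q+1$ with $q$ prime, a $B$-free poset of width $w$ admitting a wall of size $(1-o(1))w^{3/2}$, and the standard density-of-primes result (Theorem~\ref{thm:prime-density}) allows this lower bound to be transferred to arbitrary $w$. This yields $\FF(w,B) \ge (1-o(1))w^{3/2}$. The second theorem shows that any wall $\C$ of any $B$-free poset $P$ of width $w$ decomposes into at most $3w$ chains containing Dilworth-wall extremal points or contained in a Dilworth chain, plus a residual subwall $\C'$ whose signatures correspond to edges of a $C_4$-free bipartite graph on parts of size $w$. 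By the K\H{o}v\'ari--S\'os--Tur\'an bound, $|\C'| \le (1+o(1))w^{3/2}$, giving $\FF(w,B) \le (1+o(1))w^{3/2}$.

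Since the $o(1)$ terms in both inequalities are taken with respect to the same limit $w\to\infty$, sandwiching yields $\FF(w,B) = (1+o(1))w^{3/2}$, which is exactly the corollary. There is no real obstacle here: all the substantive work (the projective-plane construction and prime-density argument on the lower side, the signature analysis together with Lemma~\ref{lem:walkdown} on the upper side) has already been carried out in the two theorems. The only thing to verify when writing the proof is that the asymptotic notation on the two sides refers to the same regime, which is immediate from the statements.
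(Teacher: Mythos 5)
Your proposal is correct and matches the paper's implicit argument exactly: the corollary is the immediate conjunction of the lower-bound theorem $\FF(w,B)\ge(1-o(1))w^{3/2}$ and the upper-bound theorem $\FF(w,B)\le(1+o(1))w^{3/2}$, which the paper leaves without a separate written proof. Your observation that the two $o(1)$ terms refer to the same $w\to\infty$ regime is the only check needed, and you have made it.
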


The \emph{stacked butterfly} of height $t$, denoted $B_t$, is $Q_1\seriescomp \cdots \seriescomp Q_t$, where each $Q_i$ is a $2$-element antichain. Note that $B_{2k}$ is the series composition of $k$ copies of $B$.  A consequence of our results is that $\FF(w,B_t)$ is bounded by a polynomial in $w$ for each fixed $t$.

\begin{corollary}
$\FF(w,B_{2k}) \leq (1 + o(1))w^{3.5k - 2}$
\end{corollary}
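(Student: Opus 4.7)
The plan is to prove this by induction on $k$, using Corollary~\ref{cor:butterfly} as the base case and Theorem~\ref{thm:series} for the inductive step. The key observation is that $B_{2k} = B \seriescomp B_{2(k-1)}$, since the first two of the $2k$ antichains in the series decomposition of $B_{2k}$ together form a copy of $B$, and the remaining $2(k-1)$ antichains form a copy of $B_{2(k-1)}$.

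For the base case $k=1$, we have $B_2 = B$, and Corollary~\ref{cor:butterfly} gives $\FF(w,B) \le (1+o(1))w^{3/2}$, which matches the claimed exponent $3.5 \cdot 1 - 2 = 1.5$. For the inductive step, assume that $\FF(w, B_{2(k-1)}) \le (1+o(1))w^{3.5(k-1)-2} = (1+o(1))w^{3.5k - 5.5}$. Apply Theorem~\ref{thm:series} with $Q_1 = B$ and $Q_2 = B_{2(k-1)}$, taking $s = \FF(w,B) + 1 \le (1+o(1))w^{3/2}$ and $t = \FF(w, B_{2(k-1)}) + 1 \le (1+o(1))w^{3.5k-5.5}$, to obtain
\[
\FF(w,B_{2k}) \le stw^2 + (s+t)w \le (1+o(1)) \cdot w^{3/2} \cdot w^{3.5k - 5.5} \cdot w^2 + O(w^{3.5k - 4.5}),
\]
which simplifies to $(1+o(1))w^{3.5k - 2}$ after collecting the dominant term.

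There is no substantial obstacle here: the argument is a routine induction that combines the subquadratic upper bound from Corollary~\ref{cor:butterfly} with the multiplicative behavior of series composition provided by Theorem~\ref{thm:series}. The only small care needed is to confirm that the $(1+o(1))$ factors (with $w \to \infty$ and $k$ fixed) compose through the recursion without blowing up, which is immediate since $k$ is a constant, and to verify that the $(s+t)w$ term is absorbed by $stw^2$ in the leading exponent. One could alternatively unfold the induction and apply Theorem~\ref{thm:series} $k-1$ times directly, but the inductive presentation is cleaner.
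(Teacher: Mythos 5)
Your proof is correct and follows essentially the same approach as the paper: an induction on $k$ via the decomposition $B_{2k} = B_{2(k-1)} \seriescomp B$, applying Theorem~\ref{thm:series} together with Corollary~\ref{cor:butterfly}. The paper states the recursion more tersely, but the argument and the bookkeeping of exponents are the same.
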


\begin{proof}
From Theorem \ref{thm:series} and Corollary \ref{cor:butterfly} we have that 
\[ \FF(w,B_{2k}) \leq (1 + o(1))w^2\FF(w,B_{2(k-1)})\FF(w,B_2) = (1 + o(1))w^{3.5k - 2}.\]
\end{proof}

It would be interesting to find lower bounds on $\FF(w,B_{2k})$.  In particular, is $\FF(w,B_{2k})$ bounded below by a polynomial in $w$ whose degree grows linearly in $k$?

\section{Conclusions and Open Problems}
A consequence of Theorem~\ref{thm:dichotomy} is that $\QQ$ is the family of posets $Q$ such that $\FF(w,Q)$ is subexponential in $w$.  It may be that $\QQ$ is also the family of posets $Q$ such that $\FF(w,Q)$ is polynomial in $w$.  This is the case if and only if Question~\ref{q:polyladder} has a positive answer.  Alternatively, if Question~\ref{q:polyladder} has a negative answer, then it would be interesting to understand what structural properties of $Q$ lead to polynomial behavior of $\FF(w,Q)$.

\begin{problem}
Characterize the posets $Q$ for which $\FF(w,Q)$ is bounded above by a polynomial in $w$.
\end{problem}

We have focused on upper bounds for posets in $\QQ$ and lower bounds for posets outside $\QQ$.  It would be nice to obtain better bounds for posets outside $\QQ$.  The smallest poset of width $2$ that is outside $\QQ$ is the \emph{skewed butterfly} $\hat B$ consisting of disjoint chains $x_1<x_2<x_3$ and $y_1<y_2$ with the cover relations $x_1 < y_2$ and $y_1<x_3$.  According to Theorem~\ref{thm:reservoir_bound}, we have $\FF(w,\hat B)\ge 2^w-1$.  What is $\FF(w,\hat B)$?  Although Bosek, Krawczyk, and Matecki~\cite{BKM} provide tower-type upper bounds on $\FF(w,Q)$, there may be room for significant improvement.  

\begin{question}\label{q:skew-butterfly}
Is there any poset $Q$ of width $2$ for which $\FF(w,Q)$ is superexponential?  
\end{question}

We have studied the behavior of First-Fit on families that forbid a single poset $Q$, but it is also natural to ask about families that forbid a set of posets.  If $\CS$ is a set of posets, we say that a poset $P$ is $\CS$-free if no poset in $\CS$ is a subposet of $P$.  Let $\FF(w,\CS)$ be the maximum number of chains that First-Fit uses on an $\CS$-free poset of width $w$.

\begin{problem}\label{p:gen-families}
Characterize the sets $\CS$ for which $\FF(w,\CS)$ is bounded by a polynomial in $w$.
\end{problem}

If $\PP$ is a poset family that is closed under taking subposets, then $\PP$ is exactly the set of posets that is $\CS$-free, where $\CS$ is the set of minimial posets not in $\PP$.  A solution to Problem~\ref{p:gen-families} is therefore equivalent to a characterization of all subposet-closed families $\PP$ such that First-Fit has polynomial behavior when restricted to $\PP$.  We suspect that this is a challenging problem, but the restriction of Problem~\ref{p:gen-families} to $|\CS|\le 2$ is likely more accessible and even partial progress would still be interesting.

\bibliographystyle{plain}
\bibliography{citations}

\end{document}